\newtheorem{prop}{Proposition}
\newtheorem{theorem}[prop]{Theorem}
\newtheorem{corollary}[prop]{Corollary}
\theoremstyle{definition}
\newtheorem{definition}[prop]{Definition}
\newtheorem{remark}[prop]{Remark}
\newtheorem{example}[prop]{Example}
\newtheorem*{Acknow*}{Acknowledgements}
\newcommand{\R}{\mathbb R}
\newcommand{\N}{\mathbb N}
\newcommand{\C}{\mathbb C}
\newcommand{\cI}{\mathcal I}
\newcommand{\im}{\operatorname{im}}
\newcommand{\supp}{\operatorname{supp}}
\let\epsilon\varepsilon
\let\phi\varphi
\let\rho\varrho
\title[Runge approximation results for spaces of smooth Whitney jets]{Runge type approximation results for spaces of smooth Whitney jets}
\author[T.~Cia\'s]{Tomasz Cia\'s$^1$\orcidlink{0000-0001-7119-7814}}
\address{$^1$Faculty of Mathematics and Computer Science, Adam Mickiewicz University, Pozna\'n, ul.~Uniwersytetu Pozna\'nskiego 4, 61-614 Pozna\'n, Poland}
\email{tomasz.cias@amu.edu.pl}
\author[T.~Kalmes]{Thomas Kalmes$^2$\orcidlink{0000-0001-7542-1334}}
\address{$^2$Faculty of Mathematics, Chemnitz University of Technology, 09107 Chemnitz, Germany}
\email{thomas.kalmes@math.tu-chemnitz.de}
\date{}
\thanks{This version of the article has been accepted for publication, after peer review (when applicable) but is not the Version of Record and does not reflect post-acceptance improvements, or any corrections. The Version of Record is available online at: \href{https://doi.org/10.1007/s10231-026-01655-7}{https://doi.org/10.1007/s10231-026-01655-7}}
\begin{document}
	
\begin{abstract}
	We prove Runge type approximation results for linear partial differential operators with constant coefficients on spaces of smooth Whitney jets. Among others, we characterize when for a constant coefficient linear partial differential operator $P(D)$ and for closed subsets $F_1\subset F_2$ of $\R^d$ the restrictions to $F_1$ of smooth Whitney jets $f$ on $F_2$ satisfying $P(D)f=0$ on $F_2$ are dense in the space of smooth Whitney jets on $F_1$ satisfying the same partial differential equation on $F_1$. For elliptic operators we give a geometric evaluation of this characterization. Additionally, for differential operators with a single characteristic direction, like parabolic operators, we give a sufficient geometric condition for the above density to hold. Under mild additional assumptions on $\partial F_1$ and for $F_2=\R^d$ this sufficient conditions is also necessary. As an application of our work, we characterize those open subsets $\Omega$ of the complex plane satisfying $\Omega=\operatorname{int}\overline{\Omega}$ for which the set of holomorphic polynomials are dense in $A^\infty(\Omega)$, under the additional hypothesis that $\overline{\Omega}$ satisfies the strong regularity condition. Furthermore, for the wave operator in one spatial variable, a simple sufficient geometric condition on $F_1, F_2\subset\R^2$ is given for the above density to hold. For the special case of $F_2=\R^2$ this sufficient condition is also necessary under mild additional hypotheses on $F_1$.\\
	
	\noindent Keywords: Runge type approximation theorem; Lax-Malgrange theorem; Partial differential operators; Smooth Whitney jets\\
	
	\noindent MSC 2020: 35A35, 35E20, 58C25, 46E10, 30E10
\end{abstract}

\maketitle

\section{Introduction}\label{section:introduction}

A well-known consequence of Runge's classical theorem on rational approximation asserts that for open subsets $X_1 \subset X_2$ of the complex plane $\C$, every holomorphic function defined on $X_1$ is the local uniform limit of (restrictions to $X_1$ of) holomorphic functions on $X_2$ if and only if $X_2$ does not contain a bounded connected component of $\C\setminus X_1$. During the 1950s, Lax \cite{Lax} and Malgrange \cite{Malgrange} independently generalized this result from holomorphic functions, i.e.~solutions of the homogeneous Cauchy-Riemann equation, to kernels of elliptic partial differential operators with constant coefficients. Further generalizations reached its peak with the work of Browder \cite{Browder1962b, Browder1962} in the 1960s dealing with elliptic partial differentiable operators with variable coefficients on various spaces of functions and distributions. Since then, so-called Runge approximation theorems have become a classical topic in the analysis of (linear) partial differerential operators. Recently, this subject has been rekindled due to its many applications, most prominenty in the deep work of Enciso, Peralta-Salas on the 3D Euler \cite{EnPe12,EnPe15} and (jointly with Luc\'a) the Navier-Stokes equations \cite{EnLuPe17}.

Much less is known, however, for non-elliptic partial differential operators. Malgrange \cite[Chapitre 1.2, Th\'eor\`eme 2]{Malgrange} (see also \cite[Theorem 10.5.1]{Hor2}) proved that for an arbitrary non-zero partial differential operator with constant coefficients, the linear span of the corresponding exponential solutions is dense in the kernel on the space of smooth functions over arbitrary convex open sets. Moreover, H\"ormander \cite[Theorem 10.5.2]{Hor2} and Tr\`eves \cite[Theorem 26.1]{Treves-LCS-PDE} established implicit approximation results for general constant coefficient partial differential operators applicable to arbitrary open sets $X_1, X_2$, and to $P$-convex open sets $X_1, X_2$, respectively. Nevertheless, apart from the heat operator \cite{Diaz1980,Jones1975} (see also \cite{GT}), for non-convex $X_1$, geometric conditions on $X_1$ and $X_2$ characterizing (or at least implying) the validity of Runge approximation theorems have only been established recently. For variable coefficients parabolic operators of second degree, this has been achieved by Enciso, Garc\'{\i}a-Ferrero, Peralta-Salas \cite{EnGaPe19} and has been complemented by Enciso, Peralta-Salas \cite{EnPe21} as well as Shlapunov, Vilkov \cite{ShVi24}, while for several classes of non-elliptic constant coefficient partial differential operators such results are due to the second author \cite{Kalmes2021} as well as to Debrouwere and the second author \cite{DebKalmes2024}.

While all above mentioned results deal with Runge approximation theorems for kernels of partial differential operators on spaces of functions or distributions defined on open subsets of $\R^d$, to the best of the authors' knowledge, there are no approximation results for kernels of constant coefficients partial differential operators on spaces of smooth Whitney jets over closed subsets of $\R^d$. It is the purpose of this article to contribute to the latter setting. 

We now state a sample of our results. For the definition of a $P$-Runge pair for smooth Whitney jets, see Definition \ref{definition:P-Runge pair} in Section \ref{section:preliminaries} below. Combining Corollary \ref{corollary:general result for hypoelliptic} with Theorem \ref{theorem:elliptic} gives the first theorem.\\

\textbf{Theorem A.} \textit{Let $F_1\subset F_2$ be closed subsets of $\R^d$ and let $P\in\C[\xi_1,\ldots,\xi_d]$ be non-constant. Consider the following conditions.}
\begin{itemize}
	\item[(i)] \textit{$(F_1, F_2)$ is a $P$-Runge pair for smooth Whitney jets.}
	\item[(ii)] \textit{$F_2$ does not contain a bounded connected component of $\R^d\setminus F_1$.} 
\end{itemize}
\textit{Then, (i) implies (ii). Additionally, if $P$ is elliptic, (ii) also implies (i).}\\

For the definition of $\deg_{\xi_1} Q_k$ in the next theorem, see Theorem \ref{thm} in Section \ref{section:non-elliptic} below. Moreover, for the definition of continuous boundary, see Proposition \ref{prop:continuous boundary for d=2} in Section \ref{section:non-elliptic} below. Additionally, we denote by $e_k=(\delta_{j,k})_{1\leq j\leq d}$ (Kronecker's $\delta$), $k=1,\ldots, d$, the canonical basis vectors in $\R^d$. We would like to point out that the hypotheses on $P$ in Theorem B are satisfied by polynomials of the form $P(\xi_1,\ldots,\xi_d)=c\xi_1^k+R(\xi_2,\ldots,\xi_d)$, where $R$ is an elliptic polynomial in $d-1$ variables of degree strictly larger than $k\in\N_0$ and $c\in\C$. In particular, this covers parabolic partial differential operators/polynomials with $\xi_1$ in the r\^{o}le of the time variable like the heat operator $P(D)=\partial_1-\sum_{j=2}^d\partial_j^2$, or the time-dependent free Schr\"odinger operator, again with $\xi_1$ in the r\^{o}le of the time variable, $P(D)=i\partial_1+\sum_{j=2}^d \partial_j^2$. \\

\textbf{Theorem B.}\label{thm B} \textit{Let $d\geq 2$ and let $P$ be a polynomial of degree $m$ with principal part $P_m$ such that $\{\xi\in\R^d: P_m(\xi)=0\}=\operatorname{span}\{e_1\}$. Moreover, assume that
	$$P(\xi_1,\ldots,\xi_d)=\sum_{k=0}^mQ_k(\xi_1,\ldots,\xi_{d-1})\xi_d^k,$$
	where $Q_k\in\C[\xi_1,\ldots,\xi_{d-1}]$ satisfies $\operatorname{deg}_{\xi_1}Q_k< m-k$ for every $0\leq k\leq m-1$. Additionally, let $F_1\subset F_2$ be closed subsets of $\R^d$.}

	\textit{Consider the following conditions.}
	\begin{itemize}
		\item[(i)] \textit{There is no $c\in\R$ such that $(\R^d\setminus F_1)\cap \{c\}\times\R^{d-1}$ has a bounded connected component contained in $F_2$.}
		\item[(ii)] \textit{$(F_1, F_2)$ is a $P$-Runge pair for smooth Whitney jets.}
		\item[(iii)] \textit{There are no $c\in\R, \varepsilon>0$ such that $(\R^d\setminus F_1)\cap (c-\varepsilon,c+\varepsilon)\times\R^{d-1}$ has a bounded connected component contained in $F_2$.}
	\end{itemize}
	\textit{Then, (i) implies (ii) and (ii) implies (iii). Furthermore, under each of the following additional hypotheses, the above conditions are equivalent.}
	\begin{itemize}
		\item[(a)] \textit{Let $d\geq 3$, $F_2=\R^d$, and let $F_1$ have $C^1$-boundary such that the normal space of every $\xi\in\partial F_1$ is not spanned by $e_1$.}
		\item[(b)] \textit{Let $d=2$, $F_2=\R^2$, and let $F_1$ have continuous boundary such that for every $\xi\in\partial F_1$ and every neighborhood $U$ in $\R^2$ of $\xi$ it holds}
		$$U\cap \partial F_1\cap\{x\in \R^2: x_1 <\xi_1\}\neq \emptyset \quad\textit{and}\quad U\cap \partial F_1\cap\{x\in \R^2: x_1 >\xi_1\}\neq \emptyset.$$
	\end{itemize}

The paper is organized as follows. In Section \ref{section:preliminaries} we recall some facts about smooth Whitney jets. In Section \ref{section:arbitrary operators} we give a complete characterization of those pairs of closed subsets $F_1\subset F_2$ of $\R^d$ which are a $P$-Runge pair for a given constant coefficient partial differential operator $P(D)$. For elliptic differential operators, we evaluate this condition in Section \ref{section:elliptic} to derive the geometric formulation given in Theorem A above. Additionally, we apply our results to certain subspaces of holomorphic/harmonic functions (Examples \ref{example:holomorphic} and \ref{example:harmonic}), thereby settling the open problem \cite[Introduction]{GaNe16} when holomorphic polynomials in one complex variable are dense in $A^\infty(\Omega)$ for open subsets $\Omega$ of the complex plane for which $\Omega=\operatorname{int}\overline{\Omega}$ and $\overline{\Omega}$ satisfies the strong regularity condition (see Corollary \ref{cor:A infinity}). In the final Section \ref{section:non-elliptic} we evaluate the characterization of $P$-Runge pairs for smooth Whitney jets for certain classes of non-elliptic differential operators which contain parabolic operators. Additionally, the proof of Theorem B is given in Section \ref{section:non-elliptic}.

\section{Preliminaries}\label{section:preliminaries}

In this short preliminary section we discuss smooth Whitney jets and properties of partial differential operators acting on spaces of smooth Whitney jets. The purpose of this section is mainly to introduce the notation and some essential notions we shall work with in this article. For anything related to functional analysis which is not explained in the text, we refer the reader to \cite{MeV}, and for notions and results about distributions and partial differential operators we refer to \cite{Hor1, Hor2}. We emphasize that by an isomorphism between locally convex spaces we mean a bijective continuous linear mapping with a continuous inverse.

As is customary, we denote the euclidean norm of a vector $x$ in $\R^k$ by $|x|$. While we use the same notation for all $k$, the dimension will always be clear from the context. In addition, for a multi-index $\alpha\in\N_0^k$ we denote its length by $|\alpha|(=\sum_{j=1}^k \alpha_j)$ as well. Again, this should not cause any confusion since the meaning will be apparent from the context. Moreover, for $x\in \R^k$ and $\varepsilon>0$ we write $B(x,\varepsilon)$ for the open euclidean ball about $x$ with radius $\varepsilon$, where we again suppress the concrete dimension $k$ in the notation, as this will be clear from the context.

For an open subset $\Omega$ of $\R^d$ we denote by $\mathscr{E}(\Omega)$ the Fr\'echet space of smooth, complex valued functions on $\Omega$ which is endowed with the sequence of seminorms $(|\cdot|_n)_{n\in\N}$ defined by
\begin{equation}\label{eq:seminorms on E}
	\forall\,g\in\mathscr{E}(\Omega)\colon\,|g|_n:=\sup\{|\partial^{\alpha}g(x)|:x\in K_n, |\alpha|\leq n\},
\end{equation}
where $(K_n)_{n\in\N}$ is an arbitrary compact exhaustion of $\Omega$. Let $\mathscr{E}'(\Omega)$ be its dual space equipped -- as other duals in this paper -- with the strong topology. As is well-known, $\mathscr{E}'(\Omega)$ is the space of distributions on $\R^d$ whose support is a compact subset of $\Omega$. For $\Omega=\R^d$ we abbreviate, as usual, $\mathscr{E}=\mathscr{E}(\R^d)$ and $\mathscr{E}'=\mathscr{E}'(\R^d)$.

For a compact subset $K$ of $\R^d$ we denote by $\mathscr{E}(K)$ the space of smooth Whitney jets on $K$, that is, families $f=(f^{(\alpha)})_{\alpha\in\N_0^d}$ of continuous (real or complex valued) functions $f^{(\alpha)}$ on $K$ for which, with the formal Taylor polynomials of arbitrary order $n$
$$T^n_xf(y)=\sum_{|\alpha|\leq n}\frac{f^{(\alpha)}(x)}{\alpha!}(y-x)^\alpha,$$
the quantities
$$q_{K,n,t}(f):=\sup\left\{\frac{|f^{(\alpha)}(y)-\partial^\alpha T_x^nf(y)|}{|x-y|^{n-|\alpha|}}\colon |\alpha|\leq n, x,y\in K, 0<|x-y|<t\right\}$$
satisfy $\lim_{t\rightarrow 0}q_{K,n,t}(f)=0$ for all $n\in\N_0$. (By the very definition, smooth Whitney jets are not usual functions, despite the fact that they are sometimes called Whitney functions in the literature.) On $\mathscr{E}(K)$ we define the sequence $(\|\cdot\|_{n,K})_{n\in\N_0}$ of seminorms by
$$\forall\,f\in\mathscr{E}(K)\colon \|f\|_{n,K}:=\sup\{|f^{(\alpha)}(x)|\colon x\in K, |\alpha|\leq n\}+\sup\{q_{K,n,t}(f)\colon t>0\}$$
which makes $\mathscr{E}(K)$ into a Fr\'echet space.

For a closed subset $F$ of $\R^d$, let $(K_l)_{l\in\N}$ be a compact exhaustion of $F$. Then, for $l\in\N$ the linear mapping
$$\rho_{l+1}^l:\mathscr{E}(K_{l+1})\rightarrow\mathscr{E}(K_l), f=(f^{(\alpha)})_{\alpha\in\N_0}\mapsto f_{\mid K_l}:=(f^{(\alpha)}_{\mid K_l})_{\alpha\in\N_0^d}$$
is correctly defined and continuous.
As usual, $\mathscr{E}(F)$ is defined as the locally convex projective limit of $(\mathscr{E}(K_l))_{l\in\N}$, i.e.\ 
$$\mathscr{E}(F):=\operatorname{proj}_{\leftarrow l}\mathscr{E}(K_l)=\left\{(f_l)_{l\in\N}\in\prod_{l\in\N}\mathscr{E}(K_l)\colon \rho_{l+1}^l(f_{l+1})=f_l,\quad l\in\N\right\}$$
equipped with the subspace topology of the topological product $\prod_{l\in\N}\mathscr{E}(K_l)$. A moment's reflection shows that $\mathscr{E}(F)$ does not depend on the particular choice of $(K_l)_{l\in\N}$.

Clearly, Taylor's Theorem implies that for $g\in\mathscr{E}$ it holds $(\partial^\alpha g_{\mid F})_{\alpha\in\N_0^d}$ (or, more precisely, $\left(\left(\partial^{\alpha}g_{\mid K_l}\right)_{\alpha\in\N_0^d}\right)_{l\in\N}$) belongs to $\mathscr{E}(F)$. A seminal result of Whitney \cite{Whitney34} (see also \cite{Malgrange1967}) says that every $f\in\mathscr{E}(F)$ has such an extension $g$. Whitney's methods yield that $\mathscr{E}(F)$ is a Fr\'echet space and hence, by the Closed Graph Theorem, the linear mapping $$\rho_F\colon\mathscr{E}\rightarrow\mathscr{E}(F), g\mapsto (\partial^\alpha g_{\mid F})_{\alpha\in\N_0^d}$$
is continuous. Obviously, 
$$\cI_F=\{g\in\mathscr{E}\colon \partial^\alpha g_{\mid{F}}=0 \text{ for all }\alpha\in\N_0^d\}$$
is the kernel of $\rho_F$. By Whitney's result,
$$0\rightarrow\cI_F\hookrightarrow\mathscr{E}\xrightarrow{\rho_F}\mathscr{E}(F)\rightarrow 0$$
is a short exact sequence of Fr\'echet spaces. It follows herefrom and the Open Mapping Theorem that $\mathscr{E}(F)$ is isomorphic to the quotient space $\mathscr{E}/\cI_F$. In particular, the sequence of seminorms $(\|\cdot\|_n)_{n\in\N}$ on $\mathscr{E}(F)$ defined as
$$\forall\,f\in\mathscr{E}(F)\colon\,||f||_n:=\inf\left\{|g|_{n}\colon g\in\mathscr{E}  \text{ and } \partial^\alpha g_{\mid{F}}=f^{(\alpha)}, \alpha\in\N_0^d\right\}$$
also defines the Fr\'echet space topology on $\mathscr{E}(F)$.

We also define
$$i_F\colon\mathscr{E}/\cI_F\to\mathscr{E}(F), g+\cI_F\mapsto (\partial^\alpha g_{\mid F})_{\alpha\in\N_0^d}$$
which by the above is easily seen to be a well-defined isomorphism. With $q_F:\mathscr{E}\rightarrow\mathscr{E}/\cI_F$ denoting the quotient map, it follows
$$\rho_F=i_F\circ q_F\colon\mathscr{E}\to\mathscr{E}(F), f\mapsto(\partial^\alpha f_{\mid F})_{\alpha\in\N_0^d}.$$
We denote the (strong) dual of $\mathscr{E}(F)$ by $\mathscr{E}'(F)$.

Defining for $f=(f^{(\alpha)})_{\alpha\in\N_0^d}\in\mathscr{E}(F)$
$$D_jf=-i(f^{(\alpha+e_j)})_{\alpha\in\N_0^d}, \quad j=1,\ldots, d,$$
where $e_j=(0,\ldots,0,1,0,\ldots,0)$ is the $j$-th unit vector it follows immediately that
$$\forall\,g\in\mathscr{E}\colon D_j(\rho_F g)=\rho_F\left(-i\partial_j g\right).$$
More generally, for a polynomial $P\in\C[\xi_1,\ldots,\xi_d]$ of degree $m$, $P(\xi)=\sum_{|\alpha|\leq m}c_\alpha \xi^\alpha$, and the corresponding partial differential operator of order $m$ with constant coefficients $P(D)$ on $\mathscr{E}$,
$$P(D)\colon\mathscr{E}\rightarrow\mathscr{E}, g\mapsto\sum_{|\alpha|\leq m}c_\alpha (-i)^{|\alpha|}\partial^\alpha g$$
and
$$P_F(D)\colon \mathscr{E}(F)\rightarrow\mathscr{E}(F), f\mapsto \sum_{|\alpha|\leq m}c_\alpha D_1^{\alpha_1}\cdots D_d^{\alpha_d}f$$
we have $P_F(D)\circ\rho_F=\rho_F\circ P(D)$ and $P_F(D)$ is a continuous linear operator. By a slight abuse of notation, instead of $P_F(D)$ we sometimes also write $P(D)$. As observed by Frerick \cite[Introduction to Chapter 6]{Frerick-Habilitation}, this equality combined with the surjectivity of $P(D)\colon\mathscr{E} \to\mathscr{E} $ easily implies the surjectivity of $P_F(D)$. This stands in contrast to $P(D)$ acting on the space of smooth functions $\mathscr{E}(\Omega)$ on an open subset $\Omega$ of $\R^d$ where it is not surjective in general.

Let $\mathscr{E}_P(F)$ denote the kernel of $P(D)\colon\mathscr{E}(F)\to\mathscr{E}(F)$, i.e.
$$\mathscr{E}_P(F)=\{f\in\mathscr{E}(F):\;P(D)f=0\}.$$

\begin{definition}\label{definition:P-Runge pair}
	For a pair $(F_1,F_2)$ of closed subsets $F_1\subset F_2$ of $\R^d$ we define the (continuous) restriction operator
	\begin{equation}\label{eq-restriction-operator}
		r\colon\mathscr{E}(F_2)\to\mathscr{E}(F_1), f\mapsto\left({f^{(\alpha)}}_{\mid F_1}\right)_{\alpha\in\N_0^d}.
	\end{equation}
	Clearly, $r\left(\mathscr{E}_P(F_2)\right)\subset\mathscr{E}_P(F_1)$, and we call $(F_1, F_2)$ a \emph{$P$-Runge pair (for smooth Whitney jets)} if $r\left(\mathscr{E}_P(F_2)\right)$ is dense in $\mathscr{E}_P(F_1)$.
\end{definition}

In order to avoid unnecessary lengthy formulations, for obvious reasons we assume from now on that all polynomials are non-constant and that for every pair $(F_1,F_2)$ of closed subsets $F_1\subset F_2$ of $\R^d$ it holds $F_1\neq F_2$.

Recall that for a polynomial $P\in\C[\xi_1,\ldots,\xi_d]$ of degree $m$, $P(\xi)=\sum_{|\alpha|\leq m}c_\alpha \xi^\alpha$, its principal part is defined as $P_m(\xi)=\sum_{|\alpha|= m}c_\alpha \xi^\alpha$. Moreover, recall that $P$ (and the differential operator $P(D)$) are called elliptic if $P_m(\xi)\neq 0$ for every $\xi\in\R^d\backslash\{0\}$ while $P$ (and the differential operator $P(D)$) are hypoelliptic if for every open subset $X$ of $\R^d$ and each $u\in\mathscr{D}'(X)$ with $P(D)u\in C^\infty(X)$ we have $u\in C^\infty(X)$. For a characterization of hypoelliptic polynomials, see \cite[Section 11.1]{Hor2}. Finally, let $H=\{x\in\R^d\colon \sum_{j=1}^d x_jN_j=c\}$ be a hyperplane in $\R^d$, where $N=(N_1,\ldots,N_d)\in\R^d\backslash\{0\}$, $c\in \R$. Then, $H$ is characteristic for $P$ (and $P(D)$) if $P_m(N)=0$.

\section{Runge pairs for smooth Whitney jets}\label{section:arbitrary operators}

In this section we give a complete characterization of Runge pairs for arbitrary partial differential operators in the context of smooth Whitney jets. In order to do so, we first provide the following abstract result. As usual, for a continuous linear operator $A\colon E_1\to E_2$ between locally convex spaces, we denote its transpose by
$\prescript{t}{}A\colon E_2'\to E_1'$. Then, $\prescript{t}{}A$ is continuous when the duals $E_1'$ and $E_2'$ are both equipped with the strong topology (see \cite[Proposition 23.30]{MeV}). Moreover, by a standard consequence of the Hahn-Banach Theorem, $\im A$ is dense in $E_2$ if and only if $\prescript{t}{}A$ is injective (see \cite[Lemma 23.31]{MeV}).

From the definition of the spaces of smooth Whitney jets it follows immediately that the operator $r$ in \eqref{eq-restriction-operator} is surjective. Consequently,
$\prescript{t}{}{r}\colon \mathscr{E}'(F_1)\to \mathscr{E}'(F_2)$
is an injective continuous linear operator.

\begin{prop}\label{abstract-criterion}
	Let $E_1,E_2$ be Fr\'echet spaces and let $T_1\colon E_1\to E_1$, $T_2\colon E_2\to E_2$ and $r\colon E_2\to E_1$ be continuous linear maps. Assume moreover that $T_2$ is surjective and $r\circ T_2=T_1\circ r$. Then the following assertions are equivalent:
	\begin{enumerate}[{\upshape(i)}]
		\item $T_1$ is surjective and $r(\ker T_2)$ is dense in $\ker T_1$;
		\item $r$ has dense range and for all $y\in E_2'$ it holds $y\in\im{}^t r$ whenever ${}^tT_2(y)\in\im{}^t r$.
	\end{enumerate}
\end{prop}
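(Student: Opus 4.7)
The plan is a duality argument built on the dual identity ${}^t T_2 \circ {}^t r = {}^t r \circ {}^t T_1$ (obtained from $r \circ T_2 = T_1 \circ r$) together with two standard tools: the Hahn--Banach criterion that a continuous linear map $A$ between Fr\'echet spaces has dense range iff ${}^t A$ is injective, and the Fr\'echet closed range theorem, by which $A$ is surjective iff ${}^t A$ is injective and $\operatorname{im}({}^t A) = (\ker A)^\perp$. Since $T_2$ is surjective by hypothesis, ${}^t T_2$ is injective and $\operatorname{im}({}^t T_2) = (\ker T_2)^\perp$ from the start, and these two facts are used repeatedly.

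For the direction (ii) $\Rightarrow$ (i), dense range of $r$ gives that ${}^t r$ is injective. To show that $T_1$ is surjective, I would first verify that ${}^t T_1$ is injective by the short chase ${}^t T_1\phi = 0 \Rightarrow \phi \circ T_1 \circ r = 0 \Rightarrow \phi \circ r \circ T_2 = 0 \Rightarrow \phi \circ r = 0 \Rightarrow \phi = 0$, and then establish the closed-range identity $(\ker T_1)^\perp \subseteq \operatorname{im}({}^t T_1)$. For this, pick $\phi \in (\ker T_1)^\perp$; since $r(\ker T_2) \subseteq \ker T_1$, one obtains ${}^t r \phi \in (\ker T_2)^\perp = \operatorname{im}({}^t T_2)$, say ${}^t r \phi = {}^t T_2 \psi$. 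The crucial implication in (ii) applied to $\psi$ yields $\psi = {}^t r \chi$, whence ${}^t r \phi = {}^t T_2 {}^t r \chi = {}^t r {}^t T_1 \chi$ and injectivity of ${}^t r$ gives $\phi = {}^t T_1 \chi$. The density of $r(\ker T_2)$ in $\ker T_1$ is then a corollary: any $\psi \in (\ker T_1)'$ vanishing on $r(\ker T_2)$ extends by Hahn--Banach to $\phi \in E_1'$, and running the same computation places $\phi \in \operatorname{im}({}^t T_1)$, so $\psi = \phi|_{\ker T_1} = 0$.

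For (i) $\Rightarrow$ (ii), I would run the same machinery in reverse. Given $y \in E_2'$ with ${}^t T_2 y = {}^t r \phi$, pairing with $z \in \ker T_2$ shows $\phi \circ r$ vanishes on $\ker T_2$, hence on $\ker T_1$ by density; since $T_1$ is surjective, $\phi \in (\ker T_1)^\perp = \operatorname{im}({}^t T_1)$, say $\phi = {}^t T_1 \chi$. Then ${}^t T_2 y = {}^t r {}^t T_1 \chi = {}^t T_2 {}^t r \chi$, and injectivity of ${}^t T_2$ yields $y = {}^t r \chi$, establishing the implication in (ii). For the dense range of $r$, a similar chase shows that any $\phi \in \ker({}^t r)$ satisfies $\phi|_{\ker T_1} = 0$, hence $\phi = {}^t T_1 \chi$, and then ${}^t T_2 {}^t r \chi = {}^t r \phi = 0$ forces ${}^t r \chi = 0$ via ${}^t T_2$ injective.

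The main obstacle I anticipate is precisely this last step: the descent only shows that $\phi \in \operatorname{im}({}^t T_1^n)$ for every $n$, with all intermediate preimages again in $\ker({}^t r)$, which does not by itself force $\phi = 0$. Closing the gap will likely need a further input -- either a Mittag--Leffler-type/weak-$\ast$ closure argument on $\ker({}^t r)$, or a structural assumption (automatic in the Whitney-jet application, where $r$ is surjective and so the dense range assertion is trivial). The secondary but crucial bookkeeping point throughout is to invoke the closed range theorem only for operators whose surjectivity has already been established at that moment: in (ii) $\Rightarrow$ (i) the identity $(\ker T_1)^\perp = \operatorname{im}({}^t T_1)$ emerges simultaneously with the surjectivity of $T_1$, whereas in (i) $\Rightarrow$ (ii) it is available from the hypothesis.
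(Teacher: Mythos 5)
Your duality argument is the standard one. The paper offers no proof here---it cites Tr\`eves directly---so a head-to-head comparison of arguments is not available, but the mechanism you set up is correct where it applies: in (ii) $\Rightarrow$ (i) and in the implication ``${}^tT_2 y \in \im {}^t r \Rightarrow y \in \im {}^t r$'' of (i) $\Rightarrow$ (ii), your chases through the transposed intertwining ${}^tT_2 \circ {}^t r = {}^t r \circ {}^tT_1$, the Hahn--Banach density criterion, and the Fr\'echet closed-range identity $\im {}^tT = (\ker T)^\circ$ for surjective $T$ are all sound.

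The obstacle you flag in extracting dense range of $r$ from (i) is genuine and cannot be removed: that implication is in fact false as the proposition is printed. Take $E_1=E_2=\C$, $T_1=T_2=\id$, $r=0$. Then $r\circ T_2=T_1\circ r$ and $T_2$ is surjective; moreover $T_1$ is surjective and $r(\ker T_2)=\{0\}$ is dense in $\ker T_1=\{0\}$, so (i) holds, while $r=0$ plainly fails to have dense range, so (ii) fails. Your diagnosis of the mechanism is exact: the descent only shows $\phi\in\bigcap_{n}\im({}^tT_1^{\,n})$ with each intermediate preimage again in $\ker{}^t r$, and here ${}^tT_1=\id$ fixes every element of $\ker{}^t r=E_1'$. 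The intended reading is that ``$r$ has dense range'' should be a standing hypothesis of the proposition (and struck from (ii)); under that hypothesis your argument cleanly proves the remaining equivalence. In the paper's applications the issue is invisible, since the restriction map $r\colon\mathscr{E}(F_2)\to\mathscr{E}(F_1)$ is surjective, as the authors record immediately before the statement, so dense range of $r$ is automatic and only the pull-back condition on $\im{}^t r$ carries content.
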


\begin{proof}
	This is an immediate consequence of \cite[Proposition 13.1 and Corollary 3 on p.~54]{Treves-LCS-PDE}. 
\end{proof}

\begin{remark}\label{remark:motivation}
	Applying the previous Proposition to $E_j=\mathscr{E}(F_j)$ and to $T_j=P(D)$, $j=1,2$, it follows from the considerations proceeding Definition \ref{definition:P-Runge pair} that $(F_1, F_2)$ is a $P$-Runge pair precisely when $u\in\im\prescript{t}{}{r}$ for every $u\in\mathscr{E}'(F_2)$ with $\prescript{t}{}{P(D)}u\in\im\prescript{t}{}{r}$.
	
	Hence, for a closed subset $F$ of $\R^d$, we next collect some facts about $\mathscr{E}'(F)$ and $\prescript{t}{}{P(D)}$. With the annihilator/polar $\cI_F^\circ$ of $\cI_F$, i.e.
	$$\cI_F^\circ:=\left\{u\in\mathscr{E}'\colon u(f)=0 \text{  for all } f\in\cI_F\right\}$$
	which is obviously a closed subspace of $\mathscr{E}'$, the following holds.
\end{remark}

\begin{prop}\label{prop-IFperp}
	For every closed subset $F$ of $\R^d$, $\prescript{t}{}\rho_F:\mathscr{E}'(F)\to\cI_F^\circ$
	is a correctly defined isomorphism and $\cI_F^\circ$ consists precisely of those distributions on $\R^d$ which have compact support in $F$. Consequently, $\mathscr{E}'(F)$ can be identified with the distributions on $\R^d$ whose support is a compact subset of $F$. 
\end{prop}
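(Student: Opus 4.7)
The plan is to reduce the proposition to two independent tasks: identifying the annihilator $\cI_F^\circ$ with the set of distributions on $\R^d$ whose support is a compact subset of $F$, and then obtaining the topological isomorphism $\prescript{t}{}{\rho_F}\colon\mathscr{E}'(F)\to\cI_F^\circ$ from the general duality theory of short exact sequences of Fr\'echet spaces applied to
$$0\to\cI_F\hookrightarrow\mathscr{E}\xrightarrow{\rho_F}\mathscr{E}(F)\to 0$$
recalled in Section \ref{section:preliminaries}.

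For the identification, the easy direction is $\supp u\subset F$ whenever $u\in\cI_F^\circ$: any $\varphi\in\mathscr{E}$ with compact support disjoint from $F$ has all partial derivatives vanishing on $F$ and therefore lies in $\cI_F$, so $u(\varphi)=0$; this forces $u$ to vanish on $\R^d\setminus F$, and compactness of $\supp u$ is automatic for $u\in\mathscr{E}'$. For the converse direction, I fix $u\in\mathscr{E}'$ with compact support $K\subset F$ and $g\in\cI_F$, and show $u(g)=0$ via a cutoff argument. Since $u$ has finite order $N$, there is a compact neighborhood $L$ of $K$ such that $|u(h)|$ is controlled by the $C^N$-norm of $h$ on $L$. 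As all derivatives of $g$ vanish on $K$, Taylor's theorem yields $|\partial^\alpha g(x)|\leq C_{\alpha,M}\,d(x,K)^M$ on $L$ for every $M\in\N$ and $|\alpha|\leq N$. Choosing cutoffs $\chi_\varepsilon\in\mathscr{E}$ with $\chi_\varepsilon\equiv 1$ near $K$, supported in the $\varepsilon$-neighborhood of $K$, and satisfying $|\partial^\beta\chi_\varepsilon|\leq C_\beta\varepsilon^{-|\beta|}$, I have $u(g)=u(\chi_\varepsilon g)$ for all small $\varepsilon$; the Leibniz rule combined with the Taylor estimate for $M$ chosen strictly larger than $N$ then gives $\sup_{|\alpha|\leq N}\sup_{x\in L}|\partial^\alpha(\chi_\varepsilon g)(x)|\to 0$ as $\varepsilon\to 0$, hence $u(g)=0$.

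For the isomorphism, Whitney's extension theorem together with the identification $i_F\colon\mathscr{E}/\cI_F\to\mathscr{E}(F)$ from Section \ref{section:preliminaries} shows that $\rho_F$ is a topological quotient map between Fr\'echet spaces. Standard duality for Fr\'echet quotients (see, e.g., \cite{MeV}) then produces a topological isomorphism of the strong dual $\mathscr{E}'(F)$ onto $(\ker\rho_F)^\circ=\cI_F^\circ$ carried by $\prescript{t}{}{\rho_F}$: injectivity follows from surjectivity of $\rho_F$ by Hahn--Banach, surjectivity onto $\cI_F^\circ$ is exactly the identification just established, and bicontinuity of the strong-dual topologies is part of the quotient-duality statement.

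I expect the main obstacle to be the cutoff-plus-Taylor estimate, where the polynomial blow-up of derivatives of $\chi_\varepsilon$ of order at most $N$ must be quantitatively outbalanced by the flatness of $g$ on $K$; once the exponent $M$ is taken large enough, this amounts to a careful but routine Leibniz computation. Everything else rests on Whitney's extension theorem and standard Fr\'echet-space duality.
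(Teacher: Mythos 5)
Your proof follows essentially the same route as the paper: identify $\cI_F^\circ$ with the compactly supported distributions supported in $F$, then derive the isomorphism $\prescript{t}{}\rho_F$ from the duality of the Whitney exact sequence. The cutoff-plus-Taylor argument you give is a direct, self-contained proof of exactly the fact the paper cites as \cite[Theorem 2.3.3]{Hor1} (a distribution of finite order applied to a function that is flat on its support vanishes), so this part is fine.

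The one place where you are imprecise is the topological (not merely algebraic) identification $\mathscr{E}'(F)\cong\cI_F^\circ$. For an arbitrary Fr\'echet space $E$ with closed subspace $N$ and quotient map $q\colon E\to E/N$, the transpose $\prescript{t}{}q\colon (E/N)'_b\to N^\circ$ is always a continuous bijection but is \emph{not} in general a homeomorphism when $N^\circ$ carries the subspace topology from $E'_b$: this requires bounded subsets of $E/N$ to lift to bounded subsets of $E$, which can fail for Fr\'echet quotients (K\"othe's classical examples). Your phrase ``bicontinuity of the strong-dual topologies is part of the quotient-duality statement'' therefore overstates the generality. The paper closes exactly this gap by noting that $\mathscr{E}$ is a Fr\'echet--Schwartz space and citing \cite[Corollary 26.25 and Remark 26.5]{MeV}, which gives the lifting property and hence bicontinuity. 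So your argument needs one added sentence invoking the Schwartz (or Fr\'echet--Montel) nature of $\mathscr{E}$; with that insertion the proof is complete and matches the paper's.
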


\begin{proof}
	Since $i_F\colon\mathscr{E} /\cI_F\to\mathscr{E}(F), g+\cI_F\mapsto (\partial^\alpha g_{\mid F})_{\alpha\in\N_0^d}$ is an isomorphism, the same holds for
	$$\prescript{t}{}i_F\colon\mathscr{E}'(F)\to\left(\mathscr{E} /\cI_F\right)'.$$
	Since $\mathscr{E} $ is a Fr\'echet-Schwartz space, by \cite[Corollary 26.25 and Remark 26.5]{MeV},
	$$\prescript{t}{}q_F:\left(\mathscr{E} /\cI_F\right)'\to\cI_F^\circ$$
	is a well-defined isomorphism which proves that $\prescript{t}{}q_F\circ\prescript{t}{}i_F=\prescript{t}{}(i_F\circ q_F)=\prescript{t}{}\rho_F$ is an isomorphism from $\mathscr{E}'(F)$ onto $\cI_F^\circ$.
	
	Obviously, the subspace $\{\varphi\in\mathscr{D}(\R^d)\colon \supp\varphi\cap F=\emptyset\}$ of $\mathscr{D}(\R^d)$ is contained in $\cI_F$. Therefore, we have
	$$\cI_F^\circ\subset \{\varphi\in\mathscr{D}(\R^d)\colon \supp\varphi\cap F=\emptyset\}^\circ,$$
	where the polar on the right hand side refers to the dual pair $\mathscr{E} $, $\mathscr{E}'$. Thus, one easily deduces from the definition of support of a distribution
	$$\{\varphi\in\mathscr{D}(\R^d)\colon \supp\varphi\cap F=\emptyset\}^\circ=\{u\in\mathscr{E}'\colon \supp u\subset F\}.$$
	On the other hand, let $u\in\mathscr{E}'$ have support in $F$. Then, $u(f)=0$ for every $f\in\cI_F$ by \cite[Theorem 2.3.3]{Hor1} so that $u\in\cI_F^\circ$. Summarizing,
	$$\cI_F^\circ=\{u\in\mathscr{E}'\colon \supp u\subset F\}$$
	which completes the proof.
\end{proof}

Combining Propositions \ref{abstract-criterion} and \ref{prop-IFperp} we can now derive the main result of this section.

\begin{theorem}\label{criterion-lax-malgrange}
	Let $F_1\subset F_2$ be closed subsets of $\R^d$ and let $P(D)$ be a partial differential operator with constant coefficients. Then the following assertions are equivalent.
	\begin{enumerate}[{\upshape(i)}]
		\item $(F_1, F_2)$ is a $P$-Runge pair.
		\item Every compactly supported distribution $u$ on $\R^d$ with support contained in $F_2$ has its support already contained in $F_1$ whenever $\check P(D)u$ is supported in $F_1$. Here, as usual, $\check{P}(\xi)=P(-\xi)$.
	\end{enumerate}
\end{theorem}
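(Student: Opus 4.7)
The plan is to apply Proposition \ref{abstract-criterion} with $E_j=\mathscr{E}(F_j)$, $T_j=P(D)$ acting on $\mathscr{E}(F_j)$ for $j=1,2$, and $r$ the restriction operator of \eqref{eq-restriction-operator}. The hypotheses are already in place: the intertwining $r\circ P(D)=P(D)\circ r$ is immediate from the definition of $D_j$ on jets, and the surjectivity of $P(D)$ on $\mathscr{E}(F_2)$ was recorded in Section \ref{section:preliminaries} via Frerick's observation. Since $r$ itself is surjective (and so in particular has dense range, while $\prescript{t}{}r$ is injective), Proposition \ref{abstract-criterion} reduces (i) to the statement that for every $u\in\mathscr{E}'(F_2)$, $u\in\im\prescript{t}{}r$ whenever $\prescript{t}{}P(D)(u)\in\im\prescript{t}{}r$. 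This is precisely the abstract reformulation sketched in Remark \ref{remark:motivation}.

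The remaining task is to rewrite this reformulation in terms of compactly supported distributions on $\R^d$. I would use Proposition \ref{prop-IFperp} to identify $\mathscr{E}'(F_j)$ with $\cI_{F_j}^\circ$, i.e.\ with the space of compactly supported distributions on $\R^d$ with support in $F_j$, via the isomorphism $\prescript{t}{}\rho_{F_j}$. Under these identifications two facts need to be checked. First, from $\rho_{F_1}=r\circ\rho_{F_2}$ one obtains $\prescript{t}{}\rho_{F_1}=\prescript{t}{}\rho_{F_2}\circ\prescript{t}{}r$, which shows that the transported $\prescript{t}{}r$ is simply the natural inclusion $\cI_{F_1}^\circ\hookrightarrow\cI_{F_2}^\circ$; in particular, $u\in\im\prescript{t}{}r$ exactly means that the corresponding distribution is supported in $F_1$. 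Second, a short computation using $P(D)\circ\rho_{F_2}=\rho_{F_2}\circ P(D)$ together with the standard identity $v(P(D)g)=(\check P(D)v)(g)$ for $v\in\mathscr{E}'$ and $g\in\mathscr{E}$ (which follows by integration by parts, i.e.\ from $D^\alpha$ acting on distributions by a sign of $(-1)^{|\alpha|}$) shows that $\prescript{t}{}P(D)$, transported via $\prescript{t}{}\rho_{F_2}$, acts precisely as $\check P(D)$ on compactly supported distributions with support in $F_2$.

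Combining these two translations, condition (ii) of Proposition \ref{abstract-criterion} becomes the assertion that any compactly supported distribution $v$ supported in $F_2$ with $\check P(D)v$ supported in $F_1$ is itself already supported in $F_1$, which is exactly Theorem \ref{criterion-lax-malgrange}(ii). The only delicate point in the argument is the bookkeeping of the three identifications (duality, $\prescript{t}{}\rho_{F_j}$, and $\prescript{t}{}P(D)\leftrightarrow\check P(D)$), since the genuine functional-analytic content is already encoded in Propositions \ref{abstract-criterion} and \ref{prop-IFperp}; no further analytic input is needed.
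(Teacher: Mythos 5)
Your proposal matches the paper's proof essentially line by line: both invoke Proposition \ref{abstract-criterion} with $E_j=\mathscr{E}(F_j)$, $T_j=P_{F_j}(D)$ and the restriction $r$, then use Proposition \ref{prop-IFperp} and the identities $\prescript{t}{}{\rho_{F_1}}=\prescript{t}{}{\rho_{F_2}}\circ\prescript{t}{}{r}$ and $\prescript{t}{}{\rho_{F_2}}\circ\prescript{t}{}{P_{F_2}(D)}=\check{P}(D)\circ\prescript{t}{}{\rho_{F_2}}$ to translate the abstract condition into the support statement of (ii). The bookkeeping you flag as the delicate point is carried out in exactly the same way in the paper, so this is the same argument, not an alternative one.
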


\begin{proof}
	For the proof, we distinguish again notationally and write $P_{F_j}(D):\mathscr{E}(F_j)\to\mathscr{E}(F_j)$, $j=1,2,$
	as well as $P(D):\mathscr{E} \rightarrow\mathscr{E}$.
	We then have $\rho_{F_j}\circ P(D)=P_{F_j}(D)\circ\rho_{F_j}$ for $j=1,2$, where
	$$\rho_{F_j}:\mathscr{E} \rightarrow\mathscr{E}(F_j),f\mapsto (\partial^\alpha f_{\mid F_j})_{\alpha\in\N_0^d}.$$
	As already observed in Remark \ref{remark:motivation}, by Proposition \ref{abstract-criterion} applied to $E_j=\mathscr{E}(F_j)$, $T_j=P_{F_j}(D)$
	$j=1,2$, and the restriction operator $r\colon\mathscr{E}(F_2)\to\mathscr{E}(F_1)$, $(F_1,F_2)$ is a $P$-Runge pair precisely when
	\begin{equation}\label{cond-1}
		\forall\,u\in\mathscr{E}'(F_2)\colon\left(\prescript{t}{}{P_{F_2}(D)}u\in\im\prescript{t}{}{r}\Rightarrow u\in\im\prescript{t}{}{r}\right).
	\end{equation}
	By Proposition \ref{prop-IFperp}, $\prescript{t}{}{\rho_{F_j}}:\mathscr{E}'(F_j)\to\cI_{F_j}^\circ$, $j=1,2$, are isomorphisms so that \eqref{cond-1} is equivalent to
	\begin{equation}\label{cond0}
		\forall\,v\in\cI_{F_2}^\circ\colon\left(\prescript{t}{}{P_{F_2}(D)}\left(\left(\prescript{t}{}{\rho_{F_2}}\right)^{-1}v\right)\in\im\prescript{t}{}{r}\Rightarrow \left(\prescript{t}{}{\rho_{F_2}}\right)^{-1}v\in\im\prescript{t}{}{r}\right).
	\end{equation}
	Clearly, $\rho_{F_1}=r\circ\rho_{F_2}$, and thus, $\prescript{t}{}{\rho_{F_1}}=\prescript{t}{}{\rho_{F_2}}\circ\prescript{t}{}{r}$ with injective $\prescript{t}{}{\rho_{F_2}}$. Due to the injectivity of $\prescript{t}{}{\rho_{F_2}}$, for $u\in \mathscr{E}'(F_2)$ it holds $u\in\im\prescript{t}{}{r}$ if and only if $\prescript{t}{}{\rho_{F_2}}u\in \im \prescript{t}{}{\rho_{F_1}}=\cI_{F_1}^\circ$. Therefore, \eqref{cond0} is equivalent to 
	\begin{equation}\label{cond1}
		\forall\,v\in\cI_{F_2}^\circ\colon\left(\prescript{t}{}{\rho_{F_2}}\left(\prescript{t}{}{P_{F_2}(D)}\left(\left(\prescript{t}{}{\rho_{F_2}}\right)^{-1}v\right)\right)\in\cI_{F_1}^\circ \Rightarrow v\in\im\prescript{t}{}{r}\right).
	\end{equation}
	Since $\prescript{t}{}{\rho_{F_2}}\circ\prescript{t}{}{P_{F_2}(D)}=\prescript{t}{}{P(D)}\circ \prescript{t}{}{\rho_{F_2}}$ and $\check{P}(D):\mathscr{E}'\rightarrow\mathscr{E}'$ is the transpose $\prescript{t}{}{P(D)}$ of $P(D)$ we have
	$$\forall\,v\in\cI_{F_2}^\circ\colon\prescript{t}{}{\rho_{F_2}}\left(\prescript{t}{}{P_{F_2}(D)}\left((\prescript{t}{}{\rho_{F_2}})^{-1}v\right)\right) =\check{P}(D)v$$
	so that \eqref{cond1} simplifies to
	\begin{equation}\label{cond2}
		\forall v\in \cI_{F_2}^\circ\colon\left(\check{P}(D)v\in\cI_{F_1}^\circ\Rightarrow v\in\cI_{F_1}^\circ\right).
	\end{equation}
	One final glimpse at Proposition \ref{prop-IFperp} reveals that \eqref{cond2} is equivalent to assertion (ii) which completes the proof.
\end{proof}

As an immediate consequence of Theorem \ref{criterion-lax-malgrange} we obtain the next result.

\begin{corollary}\label{corollary:intersections}
	Let $F^j_1\subset F^j_2$ be closed subsets of $\R^d$ where $(F_1^j,F_2^j)$ are $P$-Runge pairs for every $j$ from an arbitrary index set $I$. Then, $\left(\cap_{j\in I}F_1^j, \cap_{j\in I}F_2^j\right)$ is again a $P$-Runge pair.
\end{corollary}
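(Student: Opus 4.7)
The plan is to apply Theorem \ref{criterion-lax-malgrange} directly, reducing the statement to a routine verification about supports of distributions. Set $G_1 := \bigcap_{j \in I} F_1^j$ and $G_2 := \bigcap_{j \in I} F_2^j$; these are closed subsets of $\R^d$ and clearly $G_1 \subset G_2$. To conclude that $G_1, G_2$ is a $P$-Runge pair, by Theorem \ref{criterion-lax-malgrange}(ii) it suffices to show that every compactly supported distribution $u$ on $\R^d$ with $\supp u \subset G_2$ and $\supp \check{P}(D)u \subset G_1$ already satisfies $\supp u \subset G_1$.

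So I would fix such a distribution $u$ and exploit the hypothesis indexwise. For each $j \in I$, the inclusions $\supp u \subset G_2 \subset F_2^j$ and $\supp \check{P}(D)u \subset G_1 \subset F_1^j$ hold by definition of intersection. Since $F_1^j, F_2^j$ is a $P$-Runge pair, the implication in condition (ii) of Theorem \ref{criterion-lax-malgrange} yields $\supp u \subset F_1^j$. As $j \in I$ was arbitrary, this gives $\supp u \subset \bigcap_{j \in I} F_1^j = G_1$, which is exactly what was required.

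There is essentially no obstacle here: the characterization in Theorem \ref{criterion-lax-malgrange} is perfectly tailored to behave well under intersections because the condition is phrased entirely in terms of support inclusions, and support inclusions interact with arbitrary intersections in the obvious way. Note that the index set $I$ need not be countable, since the argument is pointwise over $I$ and never aggregates any analytic data. The only mild point worth mentioning is that we do not need any hypothesis ensuring $G_1 \neq G_2$ (which was a standing convention in the paper), since the implication to be verified is vacuous in the degenerate case anyway.
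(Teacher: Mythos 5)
Your proof is correct and is exactly the argument the paper has in mind: the paper states this corollary as ``an immediate consequence of Theorem \ref{criterion-lax-malgrange}'' without further proof, and your verification via condition (ii) of that theorem, observing that support inclusions pass to arbitrary intersections, is precisely that immediate consequence.
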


\begin{corollary}\label{corollary:general result for hypoelliptic}
	Let $F_1\subset F_2$ be closed subsets of $\R^d$ such that $\R^d\setminus F_2$ is dense in $\R^d\setminus F_1$. Then, $(F_1, F_2)$ is a $P$-Runge pair for every hypoelliptic polynomial $P$.
\end{corollary}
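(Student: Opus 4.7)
The plan is to apply Theorem \ref{criterion-lax-malgrange}: it suffices to show that whenever $u\in\mathscr{E}'$ satisfies $\supp u\subset F_2$ and $\supp\check P(D)u\subset F_1$, one has $\supp u\subset F_1$. Once this is done, $F_1,F_2$ is a $P$-Runge pair. So I start by fixing such a distribution $u$ and aiming to prove that $u$ vanishes on the open set $\R^d\setminus F_1$.

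The first observation is that a polynomial $P$ is hypoelliptic if and only if $\check P(\xi)=P(-\xi)$ is, since the condition of hypoellipticity (e.g.\ via the zero set in $\C^d$ being ``far'' from the real infinity in the standard sense) is invariant under $\xi\mapsto-\xi$. Hence $\check P(D)$ is a hypoelliptic operator on $\R^d$. The hypothesis $\supp\check P(D)u\subset F_1$ means that $\check P(D)u=0$ on the open set $\R^d\setminus F_1$, and hypoellipticity of $\check P(D)$ therefore implies that (the restriction to $\R^d\setminus F_1$ of) $u$ is a $C^\infty$-function on $\R^d\setminus F_1$.

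On the other hand, since $\supp u\subset F_2$, the distribution $u$ vanishes on $\R^d\setminus F_2$, which by hypothesis is a dense subset of $\R^d\setminus F_1$. Combining this with the fact that $u$ is represented by a smooth (in particular continuous) function on $\R^d\setminus F_1$, continuity forces $u=0$ on the whole of $\R^d\setminus F_1$. Equivalently, $\supp u\subset F_1$, which is exactly the condition we needed.

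No step looks genuinely hard; the only thing that needs care is the remark that hypoellipticity passes from $P$ to $\check P$, after which the argument is a direct combination of the interior regularity theorem for hypoelliptic operators with the density assumption on $\R^d\setminus F_2$, plugged into the duality criterion of Theorem \ref{criterion-lax-malgrange}.
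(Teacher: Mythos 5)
Your proposal is correct and follows essentially the same route as the paper: invoke Theorem \ref{criterion-lax-malgrange}, observe that $\check P$ is hypoelliptic, deduce that $u$ is a smooth function on $\R^d\setminus F_1$ vanishing on the dense subset $\R^d\setminus F_2$, and conclude $u=0$ there by continuity. The only cosmetic difference is that the paper justifies hypoellipticity of $\check P$ by citing \cite[Theorem 11.1.3]{Hor2} rather than arguing it directly.
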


\begin{proof}
	Fix a hypoelliptic polynomial $P$. Let $u\in\mathscr{E}'$ be such that $\supp u\subset F_2$ and $\supp\check{P}(D)u\subset F_1$. By \cite[Theorem 11.1.3]{Hor2} $\check{P}$ is hypoelliptic. Therefore, $u_{\mid\R^d\setminus F_1}\in C^\infty(\R^d\setminus F_1)$ because $\check{P}(D)u=0$ in $\R^d\setminus F_1$. Additionally, the smooth function $u_{\mid\R^d\setminus F_1}$ vanishes on $\R^d\backslash F_2$, so that by the density of the latter in $\R^d\backslash F_1$ we conclude $u_{\mid\R^d\setminus F_1}=0$ implying $\supp u\subset F_1$. It follows from Theorem \ref{criterion-lax-malgrange} that $(F_1, F_2)$ is a $P$-Runge pair.
\end{proof}

\begin{corollary}\label{corollary:compositions}
	Let $P_1,P_2$ be polynomials. Moreover, let $F_1\subset F_2$ be closed subsets of $\R^d$. Then, the following are equivalent.
	\begin{itemize}
		\item[(i)] $(F_1, F_2)$ is a Runge pair for both polynomials $P_1$ and $P_2$.
		\item[(ii)] $(F_1, F_2)$ is a $P_1 P_2$-Runge pair. 
	\end{itemize}
\end{corollary}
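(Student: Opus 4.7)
The plan is to reduce everything to the distribution-theoretic characterization from Theorem \ref{criterion-lax-malgrange}. The two ingredients I will use are: (a) the factorization $\widecheck{P_1 P_2}(D) = \check{P}_1(D)\check{P}_2(D) = \check{P}_2(D)\check{P}_1(D)$, which is just the fact that constant coefficient operators commute and that $(-1)^{|\alpha|+|\beta|}=(-1)^{|\alpha+\beta|}$; and (b) the standard observation that for any constant coefficient operator $Q(D)$ and any distribution $u\in\mathscr{E}'$, one has $\supp Q(D)u\subset \supp u$.

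For the implication \textup{(i)}$\Rightarrow$\textup{(ii)}, suppose $F_1,F_2$ is a $P_j$-Runge pair for $j=1,2$. Let $u\in\mathscr{E}'$ with $\supp u\subset F_2$ and $\supp \check{P}_1(D)\check{P}_2(D)u\subset F_1$. Set $v:=\check{P}_2(D)u\in\mathscr{E}'$; by (b) one has $\supp v\subset \supp u\subset F_2$, and by hypothesis $\supp\check{P}_1(D)v\subset F_1$. The characterization of $P_1$-Runge pairs in Theorem \ref{criterion-lax-malgrange} then yields $\supp v\subset F_1$, i.e. $\supp\check{P}_2(D)u\subset F_1$. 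Applying the same characterization to the $P_2$-Runge pair gives $\supp u\subset F_1$, and Theorem \ref{criterion-lax-malgrange} in the other direction yields \textup{(ii)}.

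For \textup{(ii)}$\Rightarrow$\textup{(i)}, fix $j\in\{1,2\}$ and let $u\in\mathscr{E}'$ satisfy $\supp u\subset F_2$ and $\supp\check{P}_j(D)u\subset F_1$. Using (a), write $\widecheck{P_1P_2}(D)u$ as $\check{P}_k(D)\bigl(\check{P}_j(D)u\bigr)$ where $\{j,k\}=\{1,2\}$. Then by (b), $\supp \widecheck{P_1P_2}(D)u\subset\supp\check{P}_j(D)u\subset F_1$. Applying the characterization to the $P_1P_2$-Runge pair gives $\supp u\subset F_1$, and Theorem \ref{criterion-lax-malgrange} yields that $F_1,F_2$ is a $P_j$-Runge pair.

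Essentially no obstacle is expected: once the support characterization is invoked, both directions are one-line manipulations using the commutativity of constant coefficient operators and the support-shrinking property. The only bookkeeping point to verify is that the intermediate distribution $v=\check{P}_2(D)u$ indeed lies in $\mathscr{E}'$ (which is immediate since $\check{P}_2(D)$ maps $\mathscr{E}'$ into itself) so that Theorem \ref{criterion-lax-malgrange} is applicable at the intermediate step.
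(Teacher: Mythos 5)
Your argument is correct and is essentially the paper's own: both implications are reduced to the support criterion of Theorem \ref{criterion-lax-malgrange}, using that $\check{P}_1(D)$ and $\check{P}_2(D)$ commute and that a constant-coefficient operator does not enlarge supports. You merely spell out the intermediate distribution $v=\check{P}_2(D)u$ in the direction the paper dismisses as ``follows immediately,'' and you handle both factors symmetrically in the converse where the paper argues for $P_2$ and then says the $P_1$ case is similar.
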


\begin{proof}
	That (i) implies (ii) follows immediately from Theorem \ref{criterion-lax-malgrange}. Similarly, if (ii) holds and $u\in\mathscr{E}'$ has support in $F_2$ and satisfies $\supp \check{P}_2(D)u\subset F_1$, it follows from $\supp \check{P}_1(D)\check{P}_2(D)u\subset F_1$, (ii) and Theorem \ref{criterion-lax-malgrange} that $\supp u\in F_1$. Therefore, $(F_1, F_2)$ is a $P_2$-Runge pair. That $(F_1, F_2)$ is also a $P_1$-Runge pair follows similarly taking into account that $\check{P}_1(D)\check{P}_2(D)=\check{P}_2(D)\check{P}_1(D)$.
\end{proof}

Next we prove a necessary condition for pairs of closed sets $(F_1, F_2)$ to be a $P$-Runge pair. In Section \ref{section:elliptic} we will show that this necessary condition is also sufficient for elliptic partial differential operators.

\begin{corollary}\label{cor:necessary condition}
	Let the pair of closed subsets $(F_1, F_2)$ be a $P$-Runge pair for some $P$. Then, $F_2$ does not contain a bounded connected component of $\R^d\backslash F_1$. 
\end{corollary}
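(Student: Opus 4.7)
I would prove the contrapositive via Theorem~\ref{criterion-lax-malgrange}: given a bounded connected component $U$ of $\R^d\setminus F_1$ contained in $F_2$, I would construct a distribution $u\in\mathscr{E}'$ with $\supp u\subset F_2$, $\supp\check{P}(D)u\subset F_1$, and $\supp u\not\subset F_1$, contradicting condition~(ii) of that theorem.

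First I would record the geometry of $U$. Since $\R^d$ is locally connected, $U$ is open and $\overline{U}$ is compact. Any $x\in\partial U$ lies in $\overline{U}$; if $x$ were in $\R^d\setminus F_1$, then the connected component of $x$ in $\R^d\setminus F_1$ would be open, would meet $U$, and would hence coincide with $U$, forcing $x\in U$ and contradicting $x\in\partial U$. Thus $\partial U\subset F_1$, so $\overline{U}\subset F_2$ and $U\cap F_1=\emptyset$.

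The main step is the construction of $u$. I would take a complex exponential solution $v(x):=e^{i\langle x,\xi\rangle}$ of $\check{P}(D)v=0$, where $\xi\in\C^d$ is any zero of the non-constant polynomial $\check{P}$ (such a $\xi$ exists since $\check P$ is non-constant), and set $u:=\mathbf{1}_{U}\cdot v$, viewed as an $L^\infty$ function with compact support in $\overline{U}\subset F_2$. Since $v$ is nowhere zero, $\supp u=\overline{U}$ contains the nonempty set $U$, which is disjoint from $F_1$, so $\supp u\not\subset F_1$. A local computation on $\R^d\setminus\partial U=U\cup(\R^d\setminus\overline{U})$ then yields $\check{P}(D)u=0$ there (on $U$, $u$ coincides with the smooth solution $v$, so $\check{P}(D)u=\check{P}(\xi)v=0$; on $\R^d\setminus\overline{U}$, $u=0$), so $\supp\check{P}(D)u\subset\partial U\subset F_1$. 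The essential idea is to use a nowhere-vanishing solution of the adjoint equation, which simultaneously guarantees that $u$ reaches past $F_1$ into $U$ and that $\check{P}(D)u$ is killed on $U$. The only mildly delicate point is the distributional computation across the discontinuity of $\mathbf{1}_{U}$, which is handled by the locality of the equation together with $u$ being smooth on each open piece of $\R^d\setminus\partial U$.
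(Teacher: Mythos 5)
Your proposal is correct and takes essentially the same route as the paper: both pick a nonvanishing exponential solution $v$ of $\check P(D)v=0$, multiply by the indicator of the bounded component, and check that the resulting compactly supported distribution violates the support criterion of Theorem~\ref{criterion-lax-malgrange}. The only difference is cosmetic: you spell out the (standard) observation that the boundary of a connected component of $\R^d\setminus F_1$ lies in $F_1$, which the paper uses implicitly.
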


\begin{proof}
	Since $P$ is non-constant there is $\zeta\in\C^d$ with $\check{P}(\zeta)=0$. Thus, $\check{P}(D)g_0=0$ for the real analytic function $g_0(x)=\exp\left(i\sum_{j=1}^d \zeta_j x_j\right)$, $x\in\R^d$.
	
	Assume that $G$ is a bounded (open) connected component of $\R^d\backslash F_1$ which is contained in $F_2$. Then,
	$$g:\R^d\to\C,x\mapsto\begin{cases}
		g_0(x),&\text{if }x\in G\\ 0,&\text{otherwise}
	\end{cases}$$
	is an integrable function defining the compactly supported distribution
	$$u_g:\mathscr{D}(\R^d)\to\C,\varphi\mapsto \int_{\R^d}g(x)\varphi(x)dx,$$
	and since $F_2$ is closed, $\supp u_g=\overline{G}\subset F_2$. In particular, $\supp u_g$ is not contained in $F_1$. On the other hand, $\check{P}(D) u_g$ vanishes on $G\cup (\R^d\backslash\overline{G})$, so
	$$\supp\check{P}(D)u_g\subset\partial G\subset F_1$$
	since $F_1$ is closed. By Theorem \ref{criterion-lax-malgrange}, $(F_1,F_2)$ is not a $P$-Runge pair.
\end{proof}

We close this section by considering the special case when our closed sets are closures of open subsets with nice boundary.\\

Let $\Omega$ be a non-empty open subset of $\R^d$. Then the elements $(f^{(\alpha)})_{\alpha\in\N_0^d}$ of $\mathscr{E}(\overline{\Omega})$ are uniquely determined by the function $f^{(0)}$ on $\Omega$, hence $\mathscr{E}(\overline\Omega)$ and $\mathscr{E}_P(\overline\Omega)$ can be treated as function spaces on $\Omega$. Also in this case, the kernel of a differential operator $P(D)$ on the space $\mathscr{E}(\overline\Omega)$ is determined by the kernel of the same operator on the space $C^\infty(\Omega)$ as we show in the next proposition. We equip the subspace $\mathscr{E}_{\overline{\Omega}}:=\left\{g\in C(\overline{\Omega})\colon g=\tilde{g}_{\mid\overline{\Omega}} \text{ for some }\tilde{g}\in\mathscr{E} \right\}$ of $C(\overline{\Omega})$ with the sequence of seminorms $(\widetilde{\|\cdot\|}_n)_{n\in\N}$ defined by 
$$\forall g\in\mathscr{E}_{\overline{\Omega}}\colon\widetilde{\|g\|}_n:=\inf\{|\tilde{g}|_n\colon \tilde{g}\in\mathscr{E}, \tilde{g}_{\mid\overline{\Omega}}=g\},$$
see \eqref{eq:seminorms on E}.

\begin{prop}\label{prop-kernels}
	Let $\Omega$ be a non-empty, open subset of $\R^d$. Then, $\mathscr{E}_{\overline{\Omega}}$ is a Fr\'echet space and
	$$\iota\colon\mathscr{E}(\overline{\Omega})\to \mathscr{E}_{\overline{\Omega}}, f=(f^{(\alpha)})_{\alpha\in\N_0^d}\mapsto f^{(0)}$$
	is a linear isomorphism. Moreover, it holds
	$$\iota(\mathscr{E}_P(\overline{\Omega}))=\left\{g_{\mid\overline{\Omega}}\colon g\in\mathscr{E} , P(D)g=0\text{ in }\Omega\right\}.$$
\end{prop}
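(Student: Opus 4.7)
The overarching observation I would exploit is that, since $\Omega$ is open, every $\mathscr{E}$-extension $\tilde g$ of an element $g\in\mathscr{E}_{\overline\Omega}$ determines all its derivatives $\partial^\alpha \tilde g$ on $\overline\Omega$ uniquely: any two such extensions differ by a smooth function vanishing on $\overline\Omega\supset\Omega$, hence, together with all its derivatives, on the open set $\Omega$, and then by continuity on $\overline\Omega$. This single observation will drive both the injectivity of $\iota$ and the identification of the kernel.

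For the Fr\'echet structure and the fact that $\iota$ is an isomorphism, I would first observe that $\mathscr{E}_{\overline\Omega}$ is by definition the range of the restriction map $R\colon\mathscr{E}\to C(\overline\Omega)$, $\tilde g\mapsto \tilde g_{\mid\overline\Omega}$, and that the seminorms $\widetilde{\|\cdot\|}_n$ are precisely the quotient seminorms induced from $(|\cdot|_n)_{n\in\N}$ modulo the closed subspace $\ker R\subset\mathscr{E}$. Consequently $\mathscr{E}_{\overline\Omega}\cong\mathscr{E}/\ker R$ is Fr\'echet. For the map $\iota$ itself, continuity follows at once from the trivial inequality $\widetilde{\|f^{(0)}\|}_n\leq\|f\|_n$, since every jet-extension of $f$ is in particular a function-extension of $f^{(0)}$; surjectivity follows from Whitney's extension theorem, by taking $f:=\rho_{\overline\Omega}(\tilde g)$ for any $\tilde g\in\mathscr{E}$ with $\tilde g_{\mid\overline\Omega}=g$; and injectivity follows from the observation above, applied to a Whitney extension of an $f$ with $f^{(0)}=0$. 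The open mapping theorem then promotes the continuous bijection $\iota$ to a topological isomorphism of Fr\'echet spaces.

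For the kernel identity, I would lean on the intertwining relation $\rho_{\overline\Omega}\circ P(D)=P_{\overline\Omega}(D)\circ\rho_{\overline\Omega}$ recalled in Section \ref{section:preliminaries}. Given $f\in\mathscr{E}_P(\overline\Omega)$ and a Whitney extension $\tilde g$ of $f$, the identity $0=P_{\overline\Omega}(D)f=\rho_{\overline\Omega}(P(D)\tilde g)$ forces, on the level of zeroth components, $(P(D)\tilde g)_{\mid\overline\Omega}=0$, and in particular $P(D)\tilde g=0$ in $\Omega$, while $\iota(f)=\tilde g_{\mid\overline\Omega}$; this handles the inclusion $\subset$. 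Conversely, if $P(D)g=0$ in $\Omega$ for some $g\in\mathscr{E}$, I set $f:=\rho_{\overline\Omega}(g)$ so that $\iota(f)=g_{\mid\overline\Omega}$; then the zeroth component of $P_{\overline\Omega}(D)f=\rho_{\overline\Omega}(P(D)g)$ equals $(P(D)g)_{\mid\overline\Omega}$, which vanishes on all of $\overline\Omega$ by continuity of $P(D)g$, and the injectivity of $\iota$ applied to the jet $P_{\overline\Omega}(D)f$ forces $P_{\overline\Omega}(D)f=0$, so $f\in\mathscr{E}_P(\overline\Omega)$.

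The main point is conceptual rather than technical: all information, both about the jet and about whether it is annihilated by $P(D)$, is already carried by the zeroth component, precisely because $\Omega$ is open and $P(D)$ commutes with restriction on the smooth level. Once this is pinpointed, the three assertions fall out together as sketched above; I do not foresee a genuinely hard step.
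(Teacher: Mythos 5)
Your proof is correct and follows essentially the same route as the paper's: both identify $\mathscr{E}_{\overline{\Omega}}$ with the quotient $\mathscr{E}/\cI_{\overline{\Omega}}$ (your $\ker R$ equals $\cI_{\overline{\Omega}}$ precisely because $\Omega$ is open and dense in $\overline{\Omega}$, which is the observation you lead with), obtain $\iota$ as the resulting isomorphism via the canonical identification $\mathscr{E}(\overline{\Omega})\cong\mathscr{E}/\cI_{\overline{\Omega}}$, and prove the kernel identity from $P_{\overline{\Omega}}(D)\circ\rho_{\overline{\Omega}}=\rho_{\overline{\Omega}}\circ P(D)$ together with density of $\Omega$ in $\overline{\Omega}$. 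The only cosmetic difference is that you argue for the isomorphism directly via a seminorm estimate, Whitney extension, and the open mapping theorem, whereas the paper simply factors $\iota=j_{\overline{\Omega}}\circ i_{\overline{\Omega}}^{-1}$; the underlying content is the same.
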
	

\begin{proof}
	It is easily seen that $$j_{\overline{\Omega}}\colon \mathscr{E}/\cI_{\overline{\Omega}}\to\mathscr{E}_{\overline{\Omega}}, g+\cI_{\overline{\Omega}}\mapsto g_{\mid\overline{\Omega}}$$
	is a well-defined isomorphism. With the canonical isomorphism $i_{\overline{\Omega}}\colon\mathscr{E}/\cI_{\overline{\Omega}}\to\mathscr{E}(\overline{\Omega})$ it follows $\iota=j_{\overline{\Omega}}\circ\iota_{\overline{\Omega}}^{-1}$ which proves that $\iota$ is an isomorphism. This also implies that $\mathscr{E}_{\overline{\Omega}}$ is a Fr\'echet space.
	
	Next, we observe that for $f\in\mathscr{E}_P(\overline{\Omega})$ trivially $\iota(f)\in\{g_{\mid\overline{\Omega}}\colon g\in\mathscr{E} , P(D)g=0\text{ in }\Omega\}$ due to $P_{\overline{\Omega}}(D)\circ\rho_{\overline{\Omega}}=\rho_{\overline{\Omega}}\circ P(D)$, where $P(D)\colon\mathscr{E} \to\mathscr{E} $. On the other hand, for $g\in\mathscr{E} $ with $P(D)g=0$ in $\Omega$ we conclude from the density of $\Omega$ in $\overline{\Omega}$ that $P(D)g\in\cI_{\overline{\Omega}}^\circ=\ker\rho_{\overline{\Omega}}$. Applying again $P_{\overline{\Omega}}(D)\circ\rho_{\overline{\Omega}}=\rho_{\overline{\Omega}}\circ P(D)$ it holds $\rho_{\overline{\Omega}}(g)\in\mathscr{E}_P(\overline{\Omega})$. From $\iota(\rho_{\overline{\Omega}}g)=g_{\overline{\Omega}}$ it follows indeed $\iota(\mathscr{E}_P(\overline{\Omega}))=\left\{g_{\mid\overline{\Omega}}\colon g\in\mathscr{E} , P(D)g=0\text{ in }\Omega\right\}.$
\end{proof}

By Proposition \ref{prop-kernels}, $\mathscr{E}_P(\overline{\Omega})$ can be identified with the Fr\'echet space of solutions in $C^\infty(\Omega)$ of the equation $P(D)f=0$ admitting a smooth extension to $\R^d$. In other words, $\mathscr{E}_P(\overline{\Omega})$ is the space of functions on $\overline\Omega$ admitting a smooth extension to $\R^d$ and satisfying the equation $P(D)f=0$ on $\Omega$.\\

If the boundary of $\Omega$ is sufficiently regular -- for example, if $\overline{\Omega}$ satisfies the so-called strong regularity condition, see below -- then some natural definitions of smooth functions on closed sets coincide (see \cite[Lemma 1.10]{Rainer2019} and \cite[Proposition 2.16]{Bierstone1980}).
Let us recall (see e.g. \cite[$(2.16.1)$]{Bierstone1980}) that a path connected closed set $F\subset\R^d$ with $F=\overline{\operatorname{int}F}$ satisfies \emph{the strong regurality condition} if
for all $x_0\in F$ there are $\epsilon>0$, $C>0$ and $\theta\in(0,1]$ such that any two points $x,y\in F\cap B(x_0,\epsilon)$ can be joined by a rectifiable arc $\gamma\subset F$ with length $|\gamma|$ such that 
\begin{enumerate}
	\item $\gamma$ meets the boundary of $F$ at most finitely many times
	\item $|\gamma|\leq C|x-y|^{\theta}$.
\end{enumerate}
It can be shown that $\overline{\Omega}$ satisfies the strong regularity condition whenever $\Omega$ has a H\"older continuous boundary. As we could not find any reference for this, for the reader's convenience, a proof of this fact, together with the definition of a H\"older continuous boundary, is given in the appendix.

By \cite[Proposition 2.16]{Bierstone1980}, in case $\Omega$ is such that $\overline{\Omega}$ satisfies the strong regularity condition, it holds $\mathscr{E}(\overline{\Omega})=C^\infty(\overline{\Omega})$, where for an arbitrary open subset $U$ of $\R^d$ 
$$C^\infty(\overline{U}):=\{f\in C^\infty(U)\colon \partial^\alpha f\text{ admits a continuous extension to }\overline{U}\text{ for all }\alpha\in\N_0^d\}.$$
Equipping $C^\infty(\overline{U})$ with the family of seminorms $\{\|\cdot\|_{K,n}\colon K\subset\overline{\Omega}\text{ compact}, n\in\N_0\}$ defined by
$$\forall f\in C^\infty(\overline{U})\colon\,\|f\|_{K,n}=\max\{|\partial^\alpha f(x)|\colon x\in K, |\alpha|\leq n\}$$
turns $C^\infty(\overline{U})$ into a Fr\'echet space on which $P(D)$ acts continuously. Consequently, 
$$C^\infty_P(\overline{\Omega}):=\{g\in C^\infty(\overline{\Omega})\colon P(D)g=0\text{ in }\Omega\}$$
is a closed subspace of $C^\infty(\overline{\Omega})$ and thus a Fr\'echet space.

\begin{corollary}\label{cor-kernels}
	Let $\Omega$ be a non-empty open subset of $\R^d$. If\, $\overline{\Omega}$ satisfies the strong regularity condition then, with the notation from Proposition \ref{prop-kernels}, $\iota\colon\mathscr{E}_P(\overline\Omega)\to C^\infty_P(\overline{\Omega})$ is an isomorphism.
\end{corollary}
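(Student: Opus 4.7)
The plan is to assemble the corollary from Proposition \ref{prop-kernels}, together with the external identification $\mathscr{E}(\overline{\Omega})=C^\infty(\overline{\Omega})$ supplied by \cite[Proposition 2.16]{Bierstone1980} under the strong regularity hypothesis. Proposition \ref{prop-kernels} already yields that $\iota\colon\mathscr{E}(\overline{\Omega})\to\mathscr{E}_{\overline{\Omega}}$ is an isomorphism of Fr\'echet spaces and that $\iota(\mathscr{E}_P(\overline{\Omega}))=\{g_{\mid\overline{\Omega}}\colon g\in\mathscr{E},\,P(D)g=0\text{ in }\Omega\}$. Under strong regularity, Bierstone's result provides two things: every $f\in C^\infty(\overline{\Omega})$ admits a global smooth extension $\tilde f\in\mathscr{E}$, and the quotient topology on $\mathscr{E}_{\overline{\Omega}}\cong\mathscr{E}/\cI_{\overline{\Omega}}$ agrees with the topology on $C^\infty(\overline{\Omega})$ generated by the seminorms $\|\cdot\|_{K,n}$. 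Composing, $\iota$ is upgraded to an isomorphism of Fr\'echet spaces $\mathscr{E}(\overline{\Omega})\to C^\infty(\overline{\Omega})$.

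Next I would verify the set-theoretic equality $\iota(\mathscr{E}_P(\overline{\Omega}))=C^\infty_P(\overline{\Omega})$. The inclusion ``$\subset$'' is immediate from Proposition \ref{prop-kernels}: the restriction to $\overline{\Omega}$ of $g\in\mathscr{E}$ with $P(D)g=0$ in $\Omega$ is trivially a member of $C^\infty_P(\overline{\Omega})$. For ``$\supset$'', given $f\in C^\infty_P(\overline{\Omega})$, the Bierstone extension yields some $\tilde f\in\mathscr{E}$ with $\tilde f_{\mid\overline{\Omega}}=f$; then $P(D)\tilde f=P(D)f=0$ on the open set $\Omega$, so Proposition \ref{prop-kernels} delivers $f=\iota(\rho_{\overline{\Omega}}\tilde f)\in\iota(\mathscr{E}_P(\overline{\Omega}))$.

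Finally, $\mathscr{E}_P(\overline{\Omega})$ and $C^\infty_P(\overline{\Omega})$ are closed subspaces of their respective ambient Fr\'echet spaces, each being the kernel of the continuous operator $P(D)$, so the restriction of the ambient isomorphism $\iota$ is itself a topological isomorphism between them. The whole argument is essentially bookkeeping around Proposition \ref{prop-kernels}; the genuine difficulty — entirely external to the paper — is the Bierstone extension-plus-topology-matching result that makes $\mathscr{E}(\overline{\Omega})=C^\infty(\overline{\Omega})$ under strong regularity, and which is invoked here as a black box.
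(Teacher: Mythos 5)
Your overall strategy matches the paper's, but there is one place where you attribute to Bierstone a step that the reference does not supply and that the paper obtains by a soft functional-analytic argument. You claim that \cite[Proposition 2.16]{Bierstone1980} provides \emph{both} the global extension of every $f\in C^\infty(\overline{\Omega})$ to some $\tilde f\in\mathscr{E}$ \emph{and} the agreement of the quotient topology on $\mathscr{E}_{\overline{\Omega}}$ with the topology of $C^\infty(\overline{\Omega})$. What Bierstone actually gives is only the first, i.e.\ the equality $\mathscr{E}_{\overline{\Omega}}=C^\infty(\overline{\Omega})$ as vector spaces. The topological identification is not part of Bierstone's statement; it is obtained in the paper by observing that the inclusion $\mathscr{E}_{\overline{\Omega}}\hookrightarrow C^\infty(\overline{\Omega})$ is evidently continuous and then invoking the Open Mapping Theorem (both spaces being Fr\'echet) to conclude that the two topologies coincide. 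If you simply insert this one-line OMT argument where you currently cite Bierstone for the topology, your proof is correct and follows essentially the same route as the paper; the remaining bookkeeping that you carry out explicitly (verifying $\iota(\mathscr{E}_P(\overline{\Omega}))=C^\infty_P(\overline{\Omega})$ and restricting $\iota$ to the closed kernel subspaces) is accurate and fills in detail that the paper leaves to the reader with its terse ``the claim now follows from Proposition \ref{prop-kernels}.''
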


\begin{proof}
	Obviously, the inclusion $\mathscr{E}_{\overline{\Omega}}\hookrightarrow C^\infty(\overline{\Omega})$ is continuous. By \cite[Proposition 2.16]{Bierstone1980} it holds $C^\infty(\overline{\Omega})=\mathscr{E}_{\overline{\Omega}}$ as vector spaces and therefore, by the Open Mapping Theorem, also as Fr\'echet spaces. The claim now follows from Proposition \ref{prop-kernels}. 
\end{proof}

\section{Runge pairs for elliptic partial differential operators}\label{section:elliptic}

In this short section we will evaluate Theorem \ref{criterion-lax-malgrange} for elliptic partial differential operators in order to obtain an analogous result of the celebrated Lax-Malgrange Theorem for the setting of smooth Whitney jets. 

\begin{theorem}\label{theorem:elliptic}
	Let $P(D)$ be an elliptic partial differential operator and let $F_1\subset F_2$ be closed subsets of $\R^d$. Then $(F_1, F_2)$ is a $P$-Runge pair if and only if $F_2$ does not contain a bounded connected component of $\R^d\setminus F_1$.
\end{theorem}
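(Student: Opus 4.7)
The necessity of the geometric condition is precisely Corollary \ref{cor:necessary condition} (it does not use ellipticity). So the plan is to prove sufficiency by appealing to the dual characterization of Theorem \ref{criterion-lax-malgrange}: assuming $F_2$ contains no bounded connected component of $\R^d\setminus F_1$, I want to show that every $u\in\mathscr{E}'$ with $\supp u\subset F_2$ and $\supp\check{P}(D)u\subset F_1$ actually satisfies $\supp u\subset F_1$.

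The key analytic input is that if $P$ is elliptic, then so is $\check{P}$ (ellipticity of the principal part is preserved under $\xi\mapsto-\xi$), and constant-coefficient elliptic operators are analytic-hypoelliptic. Consequently, since $\check{P}(D)u=0$ on the open set $\R^d\setminus F_1$, the restriction $u_{|\R^d\setminus F_1}$ is a real analytic function there. The plan is then to argue component by component: fix an arbitrary connected component $G$ of $\R^d\setminus F_1$ and exhibit a non-empty open subset of $G$ on which $u$ vanishes, so that by the identity principle for real analytic functions $u\equiv 0$ on all of $G$.

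To produce such a non-empty open set, I split into two cases. First, if $G$ is \emph{not} contained in $F_2$, then $G\setminus F_2$ is open (since $F_2$ is closed) and non-empty, and $u$ vanishes on it because $\supp u\subset F_2$. Second, if $G\subset F_2$, then by hypothesis $G$ must be unbounded. Since $u$ is compactly supported, I can choose $R>0$ so large that $\supp u\subset B(0,R)$, and then $G\setminus\overline{B(0,R)}$ is a non-empty open subset of $G$ on which $u$ vanishes. In either case, analyticity forces $u\equiv 0$ throughout $G$. Running this over every component of $\R^d\setminus F_1$ gives $u=0$ on $\R^d\setminus F_1$, i.e.\ $\supp u\subset F_1$, and Theorem \ref{criterion-lax-malgrange} then yields the $P$-Runge property.

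There is no serious obstacle: the argument reduces the theorem to three standard facts (stability of ellipticity under $P\mapsto\check{P}$, analytic hypoellipticity of constant-coefficient elliptic operators, and the identity principle for real analytic functions on connected open sets) combined with the already-established duality in Theorem \ref{criterion-lax-malgrange}. The only point worth spelling out carefully is the dichotomy between bounded and unbounded components, since it is precisely where the geometric hypothesis enters and where compactness of $\supp u$ is exploited.
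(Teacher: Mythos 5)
Your proof is correct and follows essentially the same route as the paper: necessity via Corollary \ref{cor:necessary condition}, and sufficiency by combining Theorem \ref{criterion-lax-malgrange} with analytic hypoellipticity of the (still elliptic) operator $\check{P}(D)$ and the identity principle applied component by component on $\R^d\setminus F_1$. The only cosmetic difference is that you dichotomize by whether a component $G$ is contained in $F_2$ rather than by whether $G$ is bounded, but these split the cases in exactly the same way and invoke the same facts (compactness of $\supp u$ for the unbounded components, $\R^d\setminus F_2\subset\R^d\setminus\supp u$ for the others).
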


\begin{proof}
	By Corollary \ref{cor:necessary condition} we only have to show that the condition on $F_1$ and $F_2$ is sufficient to be a $P$-Runge pair.
	
	Thus, we assume that $F_2$ does not contain a bounded component of $\R^d\setminus F_1$. Fix $u\in\mathscr{E}'$ with $\supp u\subset F_2$ and such that $\supp \check P(D)u\subset F_1$. We shall show that $\supp u\subset F_1$ which by Theorem \ref{criterion-lax-malgrange} will prove the theorem. Since $P(D)$ is elliptic, the same applies to $\check{P}(D)$. Since $\check P(D)u=0$ on $\R^d\setminus \supp \check P(D)u$, $u$ is a real analytic function on $\R^d\setminus \supp \check P(D)u$ (see \cite[Corollary 11.4.13]{Hor2}). Since $\supp\check{P}(D)u\subset F_1$ we get that $u$ is real analytic on $\R^d\setminus F_1$. Clearly, $u$ vanishes on $\R^d\setminus \supp u$. In particular, $u$ vanishes on the single unbounded open connected component of $\R^d\setminus F_1$.
	
	Additionally, by assumption, any bounded open connected component $B$ of $\R^d\setminus F_1$ intersects $\R^d\setminus F_2\subset \R^d\setminus\supp  u$, so $u$ vanishes also on $B$, by the real analyticity of $u$ on $\R^d\backslash F_1$. Hence $u=0$ on $\R^d\setminus F_1$, and $\supp u\subset F_1$.          
\end{proof}

\begin{example}\label{example:holomorphic}
	Let $\Omega$ be a non-empty, open subset of $\R^2\cong\C$. The Cauchy-Riemman operator $P(D)=\frac{1}{2}\left(\partial_1+i\partial_2\right)$ is elliptic and, as is well-known (cf.\ \cite[Theorem 4.4.2]{Hor1}), the space $\mathscr{E}_P(\Omega)$ and the Fr\'echet space of holomorphic functions $H(\Omega)$ on $\Omega$ (equipped, as usual, with the compact-open topology) coincide as Fr\'echet spaces.
	
	By definition, $A^\infty(\Omega)$ is the subspace of $H(\Omega)$ consisting of the holomorphic functions $g$ on $\Omega$ whose derivatives $g^{(l)}$ extend continuously to $\overline{\Omega}$, $l\in\N_0$. $A^\infty(\Omega)$ is a Fr\'echet space in a natural way, i.e.\ when equipped with the set of seminorms $\{\|\cdot\|_{m,K}\colon m\in\N_0, K\subset\overline{\Omega}\text{ compact}\}$ defined as
	$$\forall\,g\in A^\infty(\Omega)\colon\|g\|_{m,K}=\max_{0\leq l\leq m, z\in K}|g^{(l)}(z)|.$$
	In case of $\Omega=\operatorname{int}\overline{\Omega}$, it is easily seen that $A^\infty(\Omega)=C^\infty_P(\overline{\Omega})$ as Fr\'echet spaces.
	
	Hence, in case $\Omega=\operatorname{int}\overline{\Omega}$ and $\overline{\Omega}$ satisfies the strong regularity condition, by Corollary \ref{cor-kernels}, $A^\infty(\Omega)$ is canonically isomorphic to $\mathscr{E}_P(\overline{\Omega})$. By Theorem \ref{theorem:elliptic}, we thus derive for open subsets $\Omega_1, \Omega_2$ of $\C$ with $\overline{\Omega}_1\subset\overline{\Omega}_2$, $\Omega_j=\operatorname{int}\overline{\Omega}_j\, (j=1,2)$, and for which their closures satisfy the strong regularity condition, the equivalence of the following assertions.
	\begin{itemize}
		\item[(i)] The subspace $\{g_{\mid\Omega_1}\colon g\in A^\infty(\Omega_2)\}$ of $A^\infty(\Omega_1)$ is dense in $A^\infty(\Omega_1)$.
		\item[(ii)] $\overline{\Omega}_2$ does not contain any bounded connnected component of $\C\backslash\overline{\Omega}_1$.
	\end{itemize}
	In the above situation, for the special case $\Omega_2=\C$, it holds $A^\infty(\C)=H(\C)$ as Fr\'echet spaces. Since (holomorphic) polynomials are dense in $H(\C)$, we derive the following corollary which complements \cite[Theorem 5.1]{NeZa15}, generalizes \cite[Theorem 5.2]{NeZa15}, and answers the open problem (see \cite[Introduction]{GaNe16}) to characterize those domains $\Omega$ in $\C$ for which the polynomials are dense in $A^\infty(\Omega)$, under the additional hypothesis that $\overline{\Omega}$ satisfies the strong regularity condition.  
\end{example}

\begin{corollary}\label{cor:A infinity}
	Let $\Omega$ be an open subset of $\C$ with $\Omega=\operatorname{int}\overline{\Omega}$ and for which $\overline{\Omega}$ satisfies the strong regularity condition. Then the following are equivalent.
	\begin{itemize}
		\item[(i)] The subspace $\{p_{\mid\Omega}\colon p\in\C[z]\}$ of $A^\infty(\Omega)$ is dense in $A^\infty(\Omega)$.
		\item[(ii)] $\C\setminus\overline{\Omega}$ does not have a bounded connected component.
	\end{itemize} 
\end{corollary}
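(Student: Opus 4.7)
The plan is to deduce the corollary directly from the equivalence established at the end of Example~\ref{example:holomorphic}, specialized to the pair $\Omega_1 = \Omega$ and $\Omega_2 = \C$. The hypotheses of that equivalence are satisfied: $\overline{\Omega}_1 \subset \overline{\Omega}_2$ is trivial, $\Omega_1 = \operatorname{int}\overline{\Omega}_1$ holds by assumption, $\Omega_2 = \operatorname{int}\overline{\Omega}_2$ since $\Omega_2 = \C$, and $\overline{\Omega}_1$ satisfies the strong regularity condition by hypothesis while $\overline{\Omega}_2 = \C$ trivially does. Note moreover that $\C$ does not contain a bounded connected component of $\C\setminus\overline{\Omega}$ if and only if $\C\setminus\overline{\Omega}$ has no bounded connected component at all, so condition (ii) of the corollary coincides with condition (ii) of the equivalence in Example~\ref{example:holomorphic}.

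Thus, by Example~\ref{example:holomorphic}, condition (ii) of the corollary is equivalent to the density in $A^\infty(\Omega)$ of the subspace $\{g_{\mid\Omega}\colon g\in A^\infty(\C)\}$. Since $A^\infty(\C)=H(\C)$ as Fr\'echet spaces, this amounts to the density of $\{g_{\mid\Omega}\colon g\in H(\C)\}$ in $A^\infty(\Omega)$.

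Next I would upgrade this density of restrictions of entire functions to density of restrictions of polynomials. For this, one observes that the restriction mapping $H(\C)\to A^\infty(\Omega)$, $g\mapsto g_{\mid\Omega}$, is continuous: convergence in the compact-open topology on $H(\C)$ implies, by Cauchy's integral formula, uniform convergence of all derivatives on any compact subset of $\C$, hence in particular on any compact $K\subset\overline{\Omega}$. Since the holomorphic polynomials are dense in $H(\C)$ (e.g.\ by the Taylor expansion at the origin, which converges locally uniformly), continuity of the restriction mapping shows that $\{p_{\mid\Omega}\colon p\in\C[z]\}$ is dense in $\{g_{\mid\Omega}\colon g\in H(\C)\}$ inside $A^\infty(\Omega)$. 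Combining, $\{p_{\mid\Omega}\colon p\in\C[z]\}$ is dense in $A^\infty(\Omega)$ if and only if $\{g_{\mid\Omega}\colon g\in H(\C)\}$ is, which by the above is equivalent to (ii). Since (i) clearly implies that restrictions of entire functions are dense, the converse direction is automatic.

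There is no real obstacle here — the corollary is essentially a bookkeeping consequence of the equivalence in Example~\ref{example:holomorphic} together with the classical fact that polynomials are dense in $H(\C)$. The only point one should not skip is verifying that this classical density transfers to density in the finer topology of $A^\infty(\Omega)$, which is the continuity of restriction $H(\C)\to A^\infty(\Omega)$ noted above.
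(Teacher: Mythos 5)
Your proposal is correct and follows essentially the same route as the paper: specialize the equivalence from Example~\ref{example:holomorphic} to $\Omega_1=\Omega$, $\Omega_2=\C$, use $A^\infty(\C)=H(\C)$, and then transfer the classical density of polynomials in $H(\C)$ via continuity of the restriction map. The only difference is that you spell out the continuity of $H(\C)\to A^\infty(\Omega)$ explicitly (via Cauchy's formula), a step the paper leaves implicit.
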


\begin{example}\label{example:harmonic}
	Let $\Omega$ be a non-empty, open subset of $\R^d$. The Laplace operator $\Delta=\sum_{j=1}^d\partial_j^2$ is elliptic. Therefore, as is well-known (cf.~\cite[Theorem 4.4.2]{Hor1}), the space $\mathscr{E}_\Delta(\Omega)$ and the Fr\'echet space of harmonic functions $\mathcal{H}(\Omega)$ on $\Omega$ (equipped, as usual, with the topology of local uniform convergence) coincide as Fr\'echet spaces. In analogy to Example \ref{example:holomorphic}, under the additional assumption that $\Omega=\operatorname{int}\overline{\Omega}$, we abbreviate $C_\Delta^\infty(\overline{\Omega})$ by $\mathcal{H}^\infty(\Omega)$.
	
	As in Example \ref{example:holomorphic}, in case $\Omega=\operatorname{int}\overline{\Omega}$ and $\overline{\Omega}$ satisfies the strong regularity condition, by Corollary \ref{cor-kernels}, $\mathcal{H}^\infty(\Omega)$ and $\mathscr{E}_\Delta(\overline{\Omega})$ are canonically isomorphic. Again, by Theorem \ref{theorem:elliptic}, we thus derive for open subsets $\Omega_1, \Omega_2$ of $\R^d$ with $\overline{\Omega}_1\subset\overline{\Omega}_2$, $\Omega_j=\operatorname{int}\overline{\Omega}_j\, (j=1,2)$, and for which their closures satisfy the strong regularity condition, the equivalence of the following assertions.
	\begin{itemize}
		\item[(i)] The subspace $\{g_{\mid\Omega_1}\colon g\in \mathcal{H}^\infty(\Omega_2)\}$ of $\mathcal{H}^\infty(\Omega_1)$ is dense in $\mathcal{H}^\infty(\Omega_1)$.
		\item[(ii)] $\overline{\Omega}_2$ does not contain any bounded connnected component of $\R^d\backslash\overline{\Omega}_1$.
	\end{itemize}
	Similar to Example \ref{example:holomorphic}, in the above situation, for the special case $\Omega_2=\R^d$, we have $\mathcal{H}^\infty(\R^d)=\mathcal{H}(\R^d)$ as Fr\'echet spaces. Since by \cite[Chapitre 1.2, Th\'eor\`eme 2']{Malgrange}, harmonic polynomials are dense in $\mathcal{H}(\R^d)$, the following are equivalent for an open subset $\Omega$ of $\R^d$ with $\Omega=\operatorname{int}\overline{\Omega}$ and for which $\overline{\Omega}$ satisfies the strong regularity condition.
	\begin{itemize}
		\item[(i')] The subspace $\{p_{\mid \Omega}\colon p\in\mathcal{H}(\R^d)\text{ polynomial}\}$ of $\mathcal{H}^\infty(\Omega)$ is dense in $\mathcal{H}^\infty(\Omega)$.
		\item[(ii')] $\R^d\backslash\overline{\Omega}$ does not have a bounded connected component.
	\end{itemize}
\end{example}

\section{Runge pairs for certain non-elliptic partial differential operators}\label{section:non-elliptic}

In this section we study $P$-Runge pairs for non-elliptic operators which is motivated by the results from \cite{DebKalmes2024,Kalmes2021}.

\begin{theorem}\label{thm-non-elliptic}
	Let $d\geq 2$ and let $P\in\C[\xi_1,\ldots,\xi_d]$ be a non-elliptic polynomial with principal part $P_m$. Moreover, let $F_1\subset F_2$ be closed subsets of $\R^d$. Consider the following conditions.
	\begin{enumerate}
		\item[(i)] $(F_1, F_2)$ is a $P$-Runge pair.
		\item[(ii)] There is no characteristic hyperplane $H$ for $P(D)$ such that $F_2$ contains a bounded connected component of $(\R^d\setminus F_1)\cap H$.
		\item[(iii)] There is a subspace $W\neq \{0\}$ of $\R^d$ with $\{\xi\in\R^d\colon P_m(\xi)=0\}\subset W^\perp$ satisfying the following property.
		
		For every $y\in\R^d\setminus F_1$, and $\epsilon>0$ with $B(y,\epsilon)\subset\R^d\setminus F_1$ there is $x\in\R^d$ such that $B(y,\epsilon)\cap (x+W)\neq\emptyset$ and, in case the connected component $C$ of $(\R^d\setminus F_1)\cap (x+W)$ which contains $B(y,\epsilon)\cap (x+W)$ is bounded, $C$ is not contained in $F_2$. 
		\item[(iv)] There is a subspace $W\neq \{0\}$ of $\R^d$ with $\{\xi\in\R^d\colon P_m(\xi)=0\}\subset W^\perp$ satisfying the following property.
		
		For every $x\in\R^d$, no bounded connected component of $(\R^d\setminus F_1)\cap (x+W)$ is contained in $F_2$.
	\end{enumerate}
	Then, conditions (i), (ii), and (iii) follow from (iv). In case $P$ is hypoelliptic, (iii) implies (i), too.
\end{theorem}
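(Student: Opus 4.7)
The plan is to prove the four implications (iv)$\Rightarrow$(iii), (iv)$\Rightarrow$(ii), (iv)$\Rightarrow$(i), and, under the hypoellipticity hypothesis, (iii)$\Rightarrow$(i) separately. The first two are short combinatorial observations built on the hypothesis $\{P_m=0\}\subset W^\perp$, while the last two reduce, via Theorem \ref{criterion-lax-malgrange}, to statements about the supports of distributions $u\in\mathscr{E}'$ satisfying $\supp u\subset F_2$ and $\supp\check P(D)u\subset F_1$; the goal in each case is to deduce $\supp u\subset F_1$.

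For (iv)$\Rightarrow$(iii), I take $x=y$ in (iii): then $y\in B(y,\epsilon)\cap(y+W)$, so the intersection is non-empty, and the desired conclusion on the relevant bounded component is a direct instance of (iv). For (iv)$\Rightarrow$(ii), suppose $H=\{x:\langle N,x\rangle=c\}$ is a characteristic hyperplane (so $P_m(N)=0$) containing a bounded component $B$ of $(\R^d\setminus F_1)\cap H$ with $B\subset F_2$. The hypothesis forces $N\in W^\perp$, hence $W\subset N^\perp$, so $x_0+W\subset H$ for every $x_0\in H$. Choosing $x_0\in B$, the connected component of $(\R^d\setminus F_1)\cap(x_0+W)$ containing $x_0$ is a connected subset of $(\R^d\setminus F_1)\cap H$ through $x_0$, hence contained in $B$---a bounded component lying in $F_2$, contradicting (iv).

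For (iv)$\Rightarrow$(i), let $u\in\mathscr{E}'$ with $\supp u\subset F_2$ and $\supp\check P(D)u\subset F_1$. Assuming for contradiction that some $y_0\in\supp u\setminus F_1$ exists, consider the connected component $C_0$ of $(\R^d\setminus F_1)\cap(y_0+W)$ through $y_0$; it must meet $\R^d\setminus\supp u$---either because (iv) forbids a bounded $C_0$ to sit inside $F_2\supset\supp u$, or because $\supp u$ is compact and $C_0$ is unbounded. The crux is then a distributional support-propagation lemma for compactly supported solutions of $\check P(D)u=0$ along affine subspaces all of whose directions are non-characteristic (which follows from $\{P_m=0\}\subset W^\perp$), adapted from the machinery in \cite{Kalmes2021, DebKalmes2024} to the present setting: it propagates the vanishing of $u$ from an open subset of $C_0$ in $\R^d$ back to a neighborhood of $y_0$, contradicting $y_0\in\supp u$.

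Under the additional hypoellipticity hypothesis, (iii)$\Rightarrow$(i) admits a cleaner pointwise argument. By hypoellipticity of $\check P$ applied to $\check P(D)u=0$ on $\R^d\setminus F_1$, $u$ is $C^\infty$ on $\R^d\setminus F_1$; combined with $\{P_m=0\}\subset W^\perp$, classical partial-analyticity results for constant-coefficient PDEs (cf.~\cite[Chapter 11]{Hor2}) yield that $u$ is real-analytic in the $W$-direction on $\R^d\setminus F_1$. For each $y\in\R^d\setminus F_1$ and every sufficiently small $\epsilon>0$ with $B(y,\epsilon)\subset\R^d\setminus F_1$, (iii) provides $x_\epsilon$ with $B(y,\epsilon)\cap(x_\epsilon+W)\neq\emptyset$ such that the component $C_\epsilon$ of $(\R^d\setminus F_1)\cap(x_\epsilon+W)$ through this intersection meets $\R^d\setminus\supp u$, by the same dichotomy as above. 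Partial analyticity along the slice $x_\epsilon+W$ forces $u\equiv 0$ on $C_\epsilon$, and in particular on $B(y,\epsilon)\cap(x_\epsilon+W)$; picking $y_\epsilon$ in this non-empty intersection gives $|y_\epsilon-y|<\epsilon$ and $u(y_\epsilon)=0$, so by continuity of $u$ at $y$, $u(y)=0$. Hence $u\equiv 0$ on $\R^d\setminus F_1$, i.e.\ $\supp u\subset F_1$, as required. The main obstacle is the distributional step in (iv)$\Rightarrow$(i): without hypoellipticity the clean pointwise argument is unavailable, and one must instead carefully transpose the support-propagation technology of \cite{Kalmes2021, DebKalmes2024} to the Whitney-jet framework.
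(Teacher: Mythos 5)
Your (iv)$\Rightarrow$(iii) and (iv)$\Rightarrow$(ii) arguments are correct and essentially identical to the paper's. The problem lies in the other two implications.

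For (iv)$\Rightarrow$(i), you reduce correctly to showing $\supp u\subset F_1$ for $u\in\mathscr{E}'$ with $\supp u\subset F_2$ and $\supp\check P(D)u\subset F_1$, and you correctly identify that $C_0$ must meet $\R^d\setminus\supp u$. But the entire technical content of the proof is the appeal to ``a distributional support-propagation lemma \ldots adapted from the machinery in \cite{Kalmes2021,DebKalmes2024},'' which you do not state or prove. This is not a citation of a ready-made result; it is a construction the paper carries out from scratch. Concretely, the paper (a) joins an arbitrary $x\in F_2\setminus F_1$ to a point $y\in A\setminus\supp u$ by a continuous path inside the component $A$, (b) extracts, via a finite ball covering and a graph-connectivity argument, a polygonal chain $\mathcal{P}\subset x+W$ with a uniform $\delta$-tube inside $\R^d\setminus F_1$, and (c) observes that since $W\perp\{P_m=0\}$, any characteristic hyperplane meeting a $\delta$-tube around a segment of $\mathcal{P}$ already meets the terminal ball, so that \cite[Theorem~8.6.8]{Hor1} propagates the vanishing of $u$ segment by segment back to $B(x,\delta)$. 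None of this is in your write-up; the geometric reason why the propagation works (the orthogonality of the direction of the chain to every characteristic normal) is precisely what has to be checked and is not automatic.

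For (iii)$\Rightarrow$(i) under hypoellipticity, you take a route the paper does not take, and it rests on a claim that is not justified: that $u$ is ``real-analytic in the $W$-direction on $\R^d\setminus F_1$'' because $\{\xi:P_m(\xi)=0\}\subset W^\perp$. Hypoellipticity of $\check P$ gives $u\in C^\infty(\R^d\setminus F_1)$, and in the best cases (parabolic, semi-elliptic with appropriate multi-order) one does get anisotropic Gevrey regularity that is analytic in the ``most elliptic'' directions, but there is no general theorem in the cited Chapter 11 of \cite{Hor2} asserting analyticity along every nonzero subspace $W$ with $\{P_m=0\}\subset W^\perp$ for an arbitrary hypoelliptic constant-coefficient $P$. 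Without such a theorem, the step ``partial analyticity along the slice forces $u\equiv0$ on $C_\epsilon$'' is unsupported. The paper instead re-runs the same propagation argument as in (iv)$\Rightarrow$(i) to show the set $A$ where $u$ vanishes locally is dense in $\R^d\setminus F_1$, and then uses only the $C^\infty$ regularity supplied by hypoellipticity to conclude $u=0$ on $\overline A\cap(\R^d\setminus F_1)=\R^d\setminus F_1$. That is, hypoellipticity is used for the much weaker passage ``dense local vanishing $+$ smoothness $\Rightarrow$ global vanishing,'' not for a partial-analyticity statement. You should either supply a precise reference establishing the analyticity you invoke or adopt the propagation-plus-density strategy.
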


\begin{proof}
	First, we note that (iii) trivially follows from (iv).
	
	We continue by showing that (iv) implies (i). Thus, let $W$ be as in (iv). In view of Theorem \ref{criterion-lax-malgrange} we have to show that $\supp u\subset F_1$ whenever $u\in\mathscr{E}'(F_2)$ with $\supp \check P(D)u\subset F_1$. Thus, let us fix such $u$ as well as $x\in F_2\setminus F_1$. Let $A$ be the connected component of $(\R^d\setminus F_1)\cap (x+W)$ containing $x$. Then $A\setminus\supp u\neq\emptyset$. Indeed, while this is obvious in case $A$ is unbounded, for bounded $A$ (iv) implies that $A$ is not contained in $F_2$, so $A\setminus\supp u\supseteq A\setminus F_2\neq\emptyset$. 
	
	Next, we fix $y\in A\setminus\supp u\neq\emptyset$.
	Since $A$ is path connected (as a connected and open subset of $x+W$), there is a continuous curve $\alpha\colon[0,1]\to A$ such that $\alpha(0)=x$ and $\alpha(1)=y$. Let 
	$$\epsilon:=\min\{\operatorname{dist}(\im\alpha,F_1),\operatorname{dist}(y,\supp u)\}.$$
	Then $\epsilon>0$, $B(y,\epsilon)\subset\R^d\setminus\supp u$ and $\im\alpha+B(0,\epsilon)\subset\R^d\setminus F_1$, where $B(y,\varepsilon)$ denotes the open Euclidean ball about $y$ with radius $\varepsilon$. For the open covering $\{B(z,\epsilon)\colon z\in\im\alpha\}=\im\alpha+B(0,\epsilon)$ of the compact set $\im\alpha$ we find a finite subcovering $\{B(z_j,\epsilon)\}_{j=0}^{k}$
	for suitable $z_j\in\im\alpha$, and where -- without lost of generality -- $z_0=x$ and $z_k=y$. 
	
	Let $G$ be the graph with $\{z_0,z_1,\ldots,z_k\}$ as the set of vertices and where $z_iz_j$ is an edge if and only if the balls $B(z_i,\epsilon)$ and $B(z_j,\epsilon)$ intersect. By the connectedness of $\im\alpha$, the graph $G$ is connected and thus there is a path $\alpha(t_0),\alpha(t_1),\ldots,\alpha(t_n)$ in $G$ with $t_0=0$ and $t_n=1$. Since the line segment joining centers of two intersecting balls belongs to their union, the polygonal chain $$\mathcal{P}:=[\alpha(0),\alpha(t_1)]\cup[\alpha(t_1),\alpha(t_2)]\cup\ldots\cup[\alpha(t_{n-1}),\alpha(1)]$$ 
	is contained in 
	$$B(\alpha(0),\epsilon)\cup B(\alpha(t_1),\epsilon)\cup\ldots\cup B(\alpha(1),\epsilon)\subset\im\alpha+B(0,\epsilon).$$ 
	For 
	$$\delta:=\min\{\epsilon,\operatorname{dist}(\mathcal{P},\R^d\setminus(\im\alpha+B(0,\epsilon))\}>0$$
	we thus have 
	$$\mathcal{P}+B(0,\delta)\subset\im\alpha+B(0,\epsilon)\subset\R^d\setminus F_1.$$
	Consequently, $\check P(D)u=0$ in $\mathcal{P}+B(0,\delta)$. 
	Moreover, the convexity of $x+W$ yields $\mathcal{P}\subset x+W$.
	
	We denote the Euclidean scalar product in $\R^d$ by $\langle\cdot,\cdot\rangle$. Let $H=\{\xi\in\R^d\colon\langle\xi,N\rangle=c\}$ ($N\in\R^d, |N|=1$, $c\in\R$) be any characteristic hyperplane for $\check P(D)$ which intersects $[\alpha(t_{n-1}),\alpha(1)]+B(0,\delta)$. Then,
	$$N\in\{\xi\in\R^d\colon\check P_m(\xi)=0\}=\{\xi\in\R^d\colon P_m(\xi)=0\}\subset W^\perp,$$
	and there are $\lambda\in[0,1]$ and $z\in B(0,\delta)$ with
	\begin{align*}
		c&=\langle\lambda\alpha(t_{n-1})+(1-\lambda)\alpha(1)+z,N\rangle=\langle\alpha(1)+\lambda(\alpha(t_{n-1})-\alpha(1))+z,N\rangle\\
		&=\langle\alpha(1)+z,N\rangle+\lambda\langle\alpha(t_{n-1})-\alpha(1),N\rangle
		=\langle\alpha(1)+z,N\rangle
	\end{align*}
	so that $B(\alpha(1),\delta)\cap H\neq\emptyset$.
	
	Now, since $u$ vanishes on $B(y,\delta)=B(\alpha(1),\delta)$ and $\check P(D)u=0$ in $[\alpha(t_{n-1}),\alpha(1)]+B(0,\delta)$, it follows from \cite[Theorem 8.6.8]{Hor1} that $u$ vanishes on $[\alpha(t_{n-1}),\alpha(1)]+B(0,\delta)$ as well. Proceeding by induction we conclude that $u$ vanishes on $\mathcal{P}+B(0,\delta)$. In particular, $\supp u\subset F_2\setminus B(x,\delta)$. Since $x$ was arbitrarily chosen from the set $F_2\setminus F_1$ we obtain $\supp u\subset F_1$. This completes the proof that (iv) implies (i).
	
	Next, we prove that (iv) implies (ii). Let $W$ be again as in (iv). We assume that $F_2$ contains a bounded connected component $C$ of $(\R^d\setminus F_1)\cap H$ where $H=\{\xi\in\R^d\colon\langle\xi,N\rangle=c\}$ ($N\in\R^d, |N|=1, c\in \R$) is some characteristic hyperplane for $P(D)$. Denoting the orthogonal complement of $\operatorname{span}\{N\}$ by $H_0$, by (iv) we have $W\subset H_0$. Consequently, $W+x\subset H$ for all $x\in\R^d$ with $H=H_0+x$. In particular, for $x\in C\subset H$, $C\cap(W+x)\neq\emptyset$. Now, let $C'$ be the connected component of $C\cap(W+x)$ containing $x$. As a subset of the bounded set $C$, $C'$ is a bounded connected component of $(\R^d\setminus F_1)\cap(W+x)$ contained in $F_2$ which contradicts condition (iv).
	
	Finally, we additionally assume that $P$ is hypoelliptic. It remains to prove that (iii) implies (i). By \cite[Theorem 11.1.3]{Hor2} it follows that $\check{P}$ is hypoelliptic, too. Fix $u\in\mathscr{E}'$ with $\supp u\subset F_2$ and $\supp \check P(D)u\subset F_1$. In view of Theorem \ref{criterion-lax-malgrange}, to complete the proof we have to show $\supp u\subset F_1$.
	For this, it is enough to verify that the set 
	$$A:=\{x\in\R^d\setminus F_1\colon u \text{ vanishes on some neighborhood of } x\}$$
	is dense in $\R^d\setminus F_1$. Indeed, since $\check P(D)u=0$ on $\R^d\setminus F_1$, by hypoellipticity $u\in C^\infty(\R^d\setminus F_1)$. Thus, as a smooth function on $\R^d\setminus F_1$, if $u$ vanishes on $A$, it vanishes on the closure of $A$ in $\R^d\setminus F_1$.
	
	Let us fix $y\in\R^d\setminus F_1$. We choose $\epsilon>0$ so that $B(y,\epsilon)\subset \R^d\setminus F_1$. By assumption, there is $x\in\R^d$ with $B(y,\epsilon)\cap (x+W)\neq\emptyset$ and such that the connected component $C$ of $(\R^d\setminus F_1)\cap (x+W)$ containing $B(y,\epsilon)\cap (x+W)$ is unbounded or not contained in $F_2$. In either case, due to the compactness of $\supp u\subset F_2$, there are $z\in C, \delta>0$ such that $u$ vanishes on $B(z,\delta)\subset\R^d\setminus F_2$. By the same arguments used to prove that (iv) implies (i), we conclude that $u$ vanishes in a neighborhood of $B(y,\epsilon)\cap (x+W)$, and therefore $A\cap B(y,\epsilon)\neq\emptyset$. This means that $A$ is dense in $\R^d\setminus F_1$ and the proof is complete. 
\end{proof}

Combining Theorem \ref{thm-non-elliptic} with Corollary \ref{corollary:compositions} allows to prove the following approximation result for the one-dimensional wave operator. Compare with \cite{DebKalmes2024} for the analogous result on open subsets of $\R^2$.

\begin{corollary}\label{corollary:one dimensional wave operator}
	Let $P(D)=\partial_1^2-\partial_2^2$ be the wave operator in one spatial variable. Moreover, let $F_1\subset F_2$ be closed subsets of $\R^2$ which satisfy that for every characteristic hyperplane $H$ for $P(D)$ the set $(\R^2\backslash F_1)\cap H$ does not have a bounded connected component which is contained in $F_2$. Then, $(F_1,F_2)$ is a $P$-Runge pair.
\end{corollary}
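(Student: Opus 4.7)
The plan is to exploit the factorization of the one-dimensional wave operator, $P(D) = \partial_1^2 - \partial_2^2 = Q_1(D)Q_2(D)$, where $Q_1(D) = \partial_1 - \partial_2$ and $Q_2(D) = \partial_1 + \partial_2$ are first-order (hence non-elliptic) commuting factors. I would verify separately that $F_1, F_2$ is a $Q_j$-Runge pair for $j = 1, 2$ by invoking the implication (iv) $\Rightarrow$ (i) of Theorem \ref{thm-non-elliptic}, and then combine the two conclusions via Corollary \ref{corollary:compositions} to obtain that $F_1, F_2$ is a $Q_1 Q_2$-Runge pair, i.e.\ a $P$-Runge pair.

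For $Q_1$ the principal symbol vanishes on $\operatorname{span}\{(1,1)\}$, so the natural choice is $W_1 := \operatorname{span}\{(1,-1)\}$; then $W_1^\perp = \operatorname{span}\{(1,1)\}$ contains the zero set of the principal part of $Q_1$, as required in Theorem \ref{thm-non-elliptic}. The key observation is that the affine lines $\{x + W_1 : x \in \R^2\}$ are exactly the lines $\{y \in \R^2 : y_1 + y_2 = c\}$ for $c \in \R$, i.e.\ the characteristic hyperplanes of $Q_1(D)$. Symmetrically, for $Q_2$ the choice $W_2 := \operatorname{span}\{(1,1)\}$ identifies the affine lines $x + W_2$ with the characteristic hyperplanes $\{y : y_1 - y_2 = c\}$ of $Q_2(D)$. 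Since both families of lines are also characteristic for the product operator $P(D)$, the hypothesis of the corollary furnishes condition (iv) of Theorem \ref{thm-non-elliptic} for each $Q_j$: no bounded connected component of $(\R^2 \setminus F_1) \cap (x + W_j)$ is contained in $F_2$.

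Applying Theorem \ref{thm-non-elliptic} therefore yields that $F_1, F_2$ is a $Q_j$-Runge pair for $j = 1, 2$, and a final application of Corollary \ref{corollary:compositions} with $P_1 = Q_1$, $P_2 = Q_2$ completes the argument. Given the already developed framework, no genuine obstacle remains; the only point requiring attention is the correct matching between the zero set of the principal symbol and the subspace $W_j$, so that $W_j$ runs \emph{along} the characteristic lines of $Q_j(D)$ rather than normal to them, making the slices $x + W_j$ sweep out exactly the characteristic hyperplanes as $x$ varies.
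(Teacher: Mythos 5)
Your proposal is correct and follows essentially the same route as the paper: factor $P(D)=(\partial_1-\partial_2)(\partial_1+\partial_2)$, apply condition (iv) of Theorem \ref{thm-non-elliptic} with $W=\operatorname{span}\{(1,-1)\}$ for the first factor and $W=\operatorname{span}\{(1,1)\}$ for the second, and then invoke Corollary \ref{corollary:compositions}. (The paper's proof cites ``Theorem \ref{thm} (iv)'', which is evidently a typo for ``Theorem \ref{thm-non-elliptic} (iv)'', the result you used.)
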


\begin{proof}
	With $P_1(D)=\partial_1-\partial_2$ and $P_2(D)=\partial_1+\partial_2$ it holds $P(D)=P_1(D)P_2(D)$. Applying Theorem \ref{thm-non-elliptic} (iv) to $P_1$ with $W=\operatorname{span}\{(1,-1)\}$ and to $P_2$ with $W=\operatorname{span}\{(1,1)\}$, respectively, the hypothesis implies that $(F_1, F_2)$ is both a $P_1$-Runge pair and a $P_2$-Runge pair. Thus, the claim follows from Corollary \ref{corollary:compositions}.
\end{proof}

Under additional hypotheses, the sufficient condition from the previous result is also necessary, as will be shown in Corollary \ref{cor:global Runge for wave operator} below. We continue with another consequence of Theorem \ref{thm-non-elliptic}.

\begin{corollary}\label{corollary:sufficient condition for a single characteristic}
	Let $d\geq 2$ and let $P\in\C[\xi_1,\ldots,\xi_d]$ be a hypoelliptic polynomial such that for some $N\in\R^d\setminus\{0\}$ it holds
	$$\{\xi\in\R^d\colon P_m(\xi)=0\}=\operatorname{span}\{N\},$$
	where $P_m$ denotes the principal part of $P$. For $c\in\R$, let us set $H_c:=\{\xi\in\R^d\colon\langle \xi,N\rangle=c\}$.
	
	Let $F_1\subset F_2$ be closed subsets of $\R^d$ and assume that there is a dense set $D\subset\R$ such that for all $c\in D$
	no bounded connected component of $(\R^d\setminus F_1)\cap H_c$ is contained in $F_2$. 
	Then $(F_1,F_2)$ is a $P$-Runge pair.  
\end{corollary}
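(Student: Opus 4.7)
The plan is to deduce this directly from Theorem \ref{thm-non-elliptic} by checking its condition (iii), for which we will use the hypoellipticity of $P$.

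The natural choice is $W:=N^\perp$, the orthogonal complement of $\operatorname{span}\{N\}$. Then $W\neq\{0\}$ (since $d\geq 2$) and the hypothesis $\{\xi\in\R^d:P_m(\xi)=0\}=\operatorname{span}\{N\}$ gives $\{\xi\in\R^d:P_m(\xi)=0\}=W^\perp$, so the inclusion required in (iii) is trivially satisfied. Observe furthermore that the affine hyperplanes of the form $x+W$ are precisely the level hyperplanes $H_c=\{\xi\in\R^d:\langle\xi,N\rangle=c\}$: indeed, $x+W=H_{\langle x,N\rangle}$, and conversely every $H_c$ is of this form by choosing, e.g., $x=cN/|N|^2$.

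Now fix $y\in\R^d\setminus F_1$ and $\epsilon>0$ with $B(y,\epsilon)\subset\R^d\setminus F_1$. The set of $c\in\R$ for which $H_c\cap B(y,\epsilon)\neq\emptyset$ is the open interval $(\langle y,N\rangle-\epsilon|N|,\langle y,N\rangle+\epsilon|N|)$. By density of $D$ in $\R$, we may choose $c\in D$ in this interval, and set $x:=cN/|N|^2$ so that $x+W=H_c$ and $B(y,\epsilon)\cap(x+W)\neq\emptyset$. Let $C$ be the connected component of $(\R^d\setminus F_1)\cap(x+W)=(\R^d\setminus F_1)\cap H_c$ containing $B(y,\epsilon)\cap(x+W)$. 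If $C$ is bounded, then by the assumption on $D$, the component $C$ is not contained in $F_2$. This verifies condition (iii) of Theorem \ref{thm-non-elliptic}.

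Since $P$ is hypoelliptic by assumption, the implication ``(iii) $\Rightarrow$ (i)'' from Theorem \ref{thm-non-elliptic} applies and yields that $F_1,F_2$ is a $P$-Runge pair, completing the proof. There is no real obstacle here; the only conceptual step is to recognize that the parametrization of affine hyperplanes orthogonal to $N$ by the real parameter $c$ allows one to replace the ``for every $y,\epsilon$'' quantifier in (iii) by a density statement, which is exactly the hypothesis on $D$.
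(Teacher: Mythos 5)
Your proof is correct and takes essentially the same route as the paper: both set $W=\operatorname{span}\{N\}^\perp$ and verify condition (iii) of Theorem \ref{thm-non-elliptic}, invoking hypoellipticity to conclude via the implication (iii)$\Rightarrow$(i). Your write-up is more explicit than the paper's (which is quite terse) in spelling out how the density of $D$ in $\R$ lets one find, for any $y$ and $\epsilon$, a parameter $c\in D$ with $H_c\cap B(y,\epsilon)\neq\emptyset$, but the underlying argument is identical.
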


\begin{proof}
	Setting $W=\operatorname{span}\{N\}^\perp$, every characteristic hyperplane $H$ of $P(D)$ satisfies $H=H_c=x+W$ for suitable $c\in \R, x\in \R^d$. Thus, we see that the condition (ii) of Theorem \ref{thm-non-elliptic} is satisfied so that the claim follows from Theorem \ref{thm-non-elliptic}.
\end{proof}

Our next objective is to prove a kind of converse to the implication ''(iv)$\Rightarrow$(i)'' from Theorem \ref{thm-non-elliptic} under suitable mild additional assumptions. This requires to establish an improvement of Corollary \ref{cor:necessary condition} for certain (by Theorem \ref{theorem:elliptic} necessarily) non-elliptic polynomials. Before we do so, we introduce some notation. Clearly, for $d\geq 2$, every polynomial $P\in \C[\xi_1,\ldots,\xi_d]$ of degree $m$ can be written in a unique way as a polynomial of degree at most $m$ in the variable $\xi_d$ and with coefficients in $\C[\xi_1,\ldots,\xi_{d-1}]$, i.e.\
$$P(\xi_1,\ldots,\xi_d)=\sum_{k=0}^mQ_k(\xi_1,\ldots,\xi_{d-1})\xi_d^k,$$
where $Q_k\in\C[\xi_1,\ldots,\xi_{d-1}]$ satisfies $\operatorname{deg}Q_k\leq m-k$, $0\leq k\leq m$. A moment's reflection reveals that $Q_m$ is a non-zero, constant polynomial whenever $e_d$ is not a characteristic vector of $P$. We denote the degree of the $\xi_1$-variable in $Q_k$ by $\operatorname{deg}_{\xi_1}Q_k$ so that $\operatorname{deg}_{\xi_1}Q_k\leq \operatorname{deg}Q_k\leq m-k$ for every $0\leq k\leq m$. 

\begin{theorem}\label{thm}
	Let $d\geq 2$ and let $P$ be a polynomial of degree $m$ such that $e_1$ is characteristic for $P$ while $e_d$ is not. Moreover, assume that
	$$P(\xi_1,\ldots,\xi_d)=\sum_{k=0}^mQ_k(\xi_1,\ldots,\xi_{d-1})\xi_d^k,$$
	where $Q_k\in\C[\xi_1,\ldots,\xi_{d-1}]$ satisfies $\operatorname{deg}_{\xi_1}Q_k< m-k$ for every $0\leq k\leq m-1$.
	
	Additionally, let the pair $(F_1,F_2)$ of closed subsets $F_1\subset F_2$ of $\R^d$ be a $P$-Runge pair. Then there are no $c\in\R$, $\epsilon>0$ for which $F_2$ contains a bounded connected component of $(\R^d\setminus F_1)\cap (\left(c-\epsilon,c+\epsilon\right)\times\R^{d-1})$. 
\end{theorem}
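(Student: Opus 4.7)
The plan is to prove the contrapositive. Assume there exist $c\in\R$, $\epsilon>0$, and a bounded connected component $G$ of $(\R^d\setminus F_1)\cap((c-\epsilon,c+\epsilon)\times\R^{d-1})$ contained in $F_2$; I will construct $u\in\mathscr{E}'(\R^d)$ with $\supp u\subset F_2$, $\supp\check{P}(D)u\subset F_1$, and $\supp u\not\subset F_1$, which by Theorem~\ref{criterion-lax-malgrange} contradicts the $P$-Runge property. First I would dispose of the easy case: if $\overline{G}\cap(\{c-\epsilon,c+\epsilon\}\times\R^{d-1})=\emptyset$, then $G$ is a bounded connected component of $\R^d\setminus F_1$, and Corollary~\ref{cor:necessary condition} directly supplies the desired $u$. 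Henceforth the substantive case is when $\overline{G}$ meets at least one of the slab-defining hyperplanes at a point of $\R^d\setminus F_1$.

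Picking $\zeta\in\C^d$ with $\check{P}(\zeta)=0$ and $g(x):=\exp(i\zeta\cdot x)$, the starting candidate is $u_0:=\mathbf{1}_G\,g\in\mathscr{E}'(\R^d)$; as in the proof of Corollary~\ref{cor:necessary condition}, $\supp u_0=\overline{G}\subset F_2$, $\supp u_0\not\subset F_1$, and $\supp\check{P}(D)u_0\subset\partial G\subset F_1\cup(\{c-\epsilon,c+\epsilon\}\times\R^{d-1})$. The obstruction is the layer distribution carried by $\overline{G}\cap(\{c\pm\epsilon\}\times\R^{d-1})\setminus F_1$. To remove it, I would build a corrector $v\in\mathscr{E}'(\R^d)$ with $\supp v\subset F_2$ whose $\check{P}(D)v$ matches this bad layer, and then set $u:=u_0-v$, checking that $\supp v\not\supset\overline{G}$ so that the property $\supp u\not\subset F_1$ is preserved.

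The hypothesis $\deg_{\xi_1}Q_k<m-k$ is exactly what makes such a corrector available. Writing $\check{P}(\xi)=\sum_{j=0}^{m_1}R_j(\xi_2,\ldots,\xi_d)\xi_1^j$ with $m_1:=\deg_{\xi_1}\check{P}<m$, the bad boundary layer of $\check{P}(D)u_0$ on a slab hyperplane is a finite sum $\sum_{j=0}^{m_1-1}\varphi_j(x')\otimes\delta^{(j)}(x_1-(c\pm\epsilon))$, whose coefficients $\varphi_j$ are determined by $g$ and the transverse shape of $\overline{G}$ along the bad portion of the boundary. Since $e_d$ is non-characteristic and $Q_m$ is a non-zero constant, $\check{P}(D)$ has order $m$ in $D_d$. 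The plan is to solve $\check{P}(D)v=\text{bad layer}$ by treating $x_d$ as the non-characteristic evolution direction: one poses an auxiliary Cauchy problem for $\check{P}(D)$ in $x_d$ with initial data on a hyperplane $\{x_d=\text{const}\}$ chosen so that the resulting solution, restricted by multiplication with an appropriate cutoff to keep its support in $F_2$, produces precisely the required singular layer at $\{x_1=c\pm\epsilon\}$.

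The main obstacle is that the compact localization of $v$ inside $F_2$ must be compatible with the equation $\check{P}(D)v=\text{bad layer}$ after cutting off; this is where the strict inequality $m_1<m$ is decisive. Because the bad layer involves strictly fewer transversal derivatives ($m_1$) than the $D_d$-order of $\check{P}$ ($m$), the Cauchy data in $x_d$ can absorb the layer distribution while leaving enough free orders to perform cutoff corrections whose commutator contributions, computed via the Leibniz rule $\check{P}(D)(\chi v)=\chi\check{P}(D)v+\sum_{k\geq 1}\tfrac{1}{k!}D^k\chi\cdot(\partial^k_{\xi}\check{P})(D)v$, remain supported in $F_1$ by the same argument used for the $k=0$ term of the original $\check{P}(D)u_0$. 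Thus the construction mirrors and refines that of Corollary~\ref{cor:necessary condition}, but is now applied at the slab hyperplanes rather than to globally bounded components of $\R^d\setminus F_1$.
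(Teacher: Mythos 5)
Your reduction to the contrapositive and the disposal of the easy case $\overline{G}\cap(\{c\pm\epsilon\}\times\R^{d-1})=\emptyset$ are both fine, but the heart of your argument --- the construction of the corrector $v$ with $\check{P}(D)v$ equal to the ``bad layer'' of $\check{P}(D)(\mathbf 1_G\,g)$ on the slab faces --- is not a proof; it is a plan with unresolved obstacles at exactly the points the paper's proof has to work hardest. Concretely: (i) you never establish that a compactly supported $v$ with $\supp v\subset F_2$ and $\check{P}(D)v=\text{bad layer}$ exists at all; posing a Cauchy problem with data in the $x_d$-direction produces an $x_d$-evolution that you would have to cut off, and your Leibniz-commutator remark does not explain why $[\check{P}(D),\chi]v$ should then be supported in $F_1$ --- it generically is not, and the degree hypothesis $\deg_{\xi_1}Q_k<m-k$ does not feed into this localization in any transparent way. (ii) Your description of the bad layer as $\sum_j\varphi_j(x')\otimes\delta^{(j)}(x_1-(c\pm\epsilon))$ presupposes that $\overline{G}\cap\{x_1=c\pm\epsilon\}$ is a reasonably nice $(d-1)$-dimensional set, but $G$ is merely a bounded connected component of an open set and its trace on the slab faces can be arbitrarily irregular; the layer distribution actually equals $\check P(D)(\mathbf 1_G g)-0$ and need not have the product structure you assume.

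The paper avoids both problems by changing the test distribution rather than correcting it: instead of $\mathbf 1_G\,\exp(i\zeta\cdot x)$ it multiplies $\mathbf 1_C$ by a \emph{different} global zero-solution $v$ of $\check P(D)$, supplied by \cite[Theorems 2 and 5]{Kalmes2021}, which is chosen so that $v$ is smooth on $\R^d$, real analytic in a thinner slab, and already vanishes to infinite order on $\{x_1=c\pm\epsilon\}$ (indeed $\supp v\subset[c-\epsilon,c+\epsilon]\times\R^{d-1}$), while still being nonzero on the inner slab. Then for $w=\mathbf 1_C\,v$ the integration by parts that computes $\check P(D)w(\phi)$ produces no boundary contribution at all on $\partial C\setminus F_1\subset\{c\pm\epsilon\}\times\R^{d-1}$, since $v$ and all its derivatives vanish there; there is no bad layer to cancel. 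The hypothesis $\deg_{\xi_1}Q_k<m-k$ is precisely what \cite{Kalmes2021} requires to construct such a $v$. So the essential idea you are missing is not a corrector but the existence of a compactly-$x_1$-supported zero-solution; without importing that input (or an equivalent one), your construction does not close.
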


We would like to point out that the hypotheses on $P$ in Theorem \ref{thm} are satisfied by polynomials of the form $P(\xi_1,\ldots,\xi_d)=c\xi_1^k+R(\xi_2,\ldots,\xi_d)$, where $R$ is an elliptic polynomial in $d-1$ variables of degree strictly larger than $k\in\N_0$ and $c\in\C$. In particular, this covers parabolic partial differential operators/polynomials with $\xi_1$ in the r\^{o}le of the time variable like the heat operator $P(D)=\partial_1-\sum_{j=2}^d\partial_j^2$, or the time-dependent free Schr\"odinger operator, again with $\xi_1$ in the r\^{o}le of the time variable, $P(D)=i\partial_1+\sum_{j=2}^d \partial_j^2$. 

Additionally, under the hypotheses of Theorem \ref{thm}, for every $c\in\R$ the hyperplane $H_c=\{x\in\R^d\colon x_1=c\}$ is characteristic for $P$. Obviously, for every $\varepsilon>0$ it holds
$$\left(c-\epsilon,c+\epsilon\right)\times\R^{d-1}=H_c+B(0,\epsilon),$$
so that Theorem \ref{thm} is indeed a weak converse to Corollary \ref{corollary:sufficient condition for a single characteristic}.

\begin{proof}[Proof of Theorem \ref{thm}]
	We argue by contradiction. Let us assume that there are $c\in\R, \epsilon>0$ and a bounded connected component $C$ of $(\R^d\setminus F_1)\cap (\left(c-\epsilon,c+\epsilon\right)\times\R^{d-1})$ contained in $F_2$. By \cite[Theorem 5]{Kalmes2021} (see also the proof of \cite[Theorem 2]{Kalmes2021}), there is a smooth function $v$ on $\R^d$ which is real analytic on $(c-\frac{\epsilon}{2},c+\frac{\epsilon}{2})\times\R^{d-1}$ and satisfies $\check P(D)v=0$ and
	$$\left[c-\frac{\epsilon}{2},c+\frac{\epsilon}{2}\right]\times\R^{d-1}\subset\supp v\subset\left[c-\epsilon,c+\epsilon\right]\times\R^{d-1}.$$
	Let us define $w\in\mathscr{D}'(\R^d)$ by
	$$\forall\,\phi\in\mathscr{D}(\R^d)\colon w(\phi):=\int_C v(x)\phi(x)dx.$$
	Clearly, $\supp w\subset C\subset F_2$, so that actually $w\in\mathscr{E}'$, and 
	$$\emptyset\neq C\cap\left(\left[c-\frac{\epsilon}{2},c+\frac{\epsilon}{2}\right]\times\R^{d-1}\right)\subset\supp w$$
	so $\supp w\nsubseteq F_1$. We shall show that $\supp \check P(D)w\subset F_1$ which by Theorem \ref{criterion-lax-malgrange} will give the desired contradiction. 
	
	For $\phi\in \mathscr{D}(C)$, integration by parts yields
	\begin{align*}
		\check P(D)w(\phi)&=\int_C v(x)P(D)\phi(x)dx=\int_{(c-\varepsilon,c+\varepsilon)\times\R^{d-1}}v(x)P(D)\phi(x)dx\\
		&=\int_{(c-\varepsilon,c+\varepsilon)\times\R^{d-1}}\check{P}(D)v(x)\phi(x)dx=0.
	\end{align*}
	We shall now examine the boundary points of $C$ which are not in $F_1$. Let us fix $a\in \partial{C}\setminus F_1\subset \overline C\cap(\{c-\epsilon,c+\epsilon\}\times\R^{d-1})$ and $\delta>0$ such that $B(a,\delta)\subset\R^d\setminus F_1$. Then, since $v$ vanishes outside the slab $[c-\epsilon,c+\epsilon]\times\R^{d-1}$, integration by parts gives for all $\phi\in \mathscr{D}(B(a,\delta))$
	\begin{align*}
		\check P(D)w(\phi)&=\int_C v(x)P(D)\phi(x) dx=\int_{C\cap B(a,\delta)} v(x)P(D)\phi(x)dx\\
		&=\int_{C\cap B(a,\delta)}\check P(D)v(x)\phi(x)dx=0. 
	\end{align*}
	Since $a\in\partial C\setminus F_1$ was arbitrary, this shows that $\supp \check P(D)w$ vanishes on 
	$$(\R^d\setminus \overline{C})\cup C\cup (\partial C\setminus F_1)=\R^d\setminus(\partial C\cap F_1)$$
	implying $\supp \check P(D)w\subset\partial C\cap F_1\subset F_1$. 
\end{proof}

We shall now improve the necessary condition for $P$-Runge pairs from Theorem \ref{thm} for the special case of $F_2=\R^d$ and under suitable additional hypotheses on the boundary of $F_1$. For the special case of $d=2$ we have the following proposition.

\begin{prop}\label{prop:continuous boundary for d=2}
	Let $F\subset \R^2$ be closed. Moreover, assume that the following conditions are satisfied.
	\begin{itemize}
		\item[(a)] $F$ has continuous boundary, i.e.\ for each $\xi\in\partial F$ there is an open neighborhood $U_{\xi}$ in $\R^2$ and a homeomorphism $h_{\xi}\colon B(0,1)\to U_{\xi}$ such that $h_{\xi}(0)=\xi$ and
		$$\partial F\cap U_{\xi}=h_{\xi}\left(\{x\in B(0,1)\colon x_2=0\}\right).$$ 
		\item[(b)] For each $\xi\in\partial F$ and every neighborhood $U$ of $\xi$ in $\R^2$ it holds
		$$U\cap\partial F\cap\{x\in\R^2\colon x_1<\xi_1\}\neq\emptyset \text{ and }U\cap\partial F\cap\{x\in\R^2\colon x_1>\xi_1\}\neq\emptyset.$$	
	\end{itemize}
	Then, for every $c\in \R$ for which $(\R^2\backslash F)\cap \{c\}\times\R$ has a bounded connected component $C$, there is $\delta>0$ such that the connected component of $\left(\R^2\backslash F\right)\cap (c-\delta,c+\delta)\times\R$ which contains $C$ is bounded, too. 
\end{prop}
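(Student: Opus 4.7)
My plan is to exhibit, for a suitably small $\delta > 0$, two disjoint compact simple arcs $B_a, B_b \subset F \cap \overline{S_\delta}$ (with $S_\delta := (c - \delta, c + \delta) \times \R$), each joining the two vertical sides $\{x_1 = c \pm \delta\}$ of the closed strip, with $B_a$ lying in a small neighborhood of $(c,a)$ and $B_b$ in one of $(c,b)$. Writing $C = \{c\} \times (a, b)$ (so that $(c,a), (c,b) \in \partial F \subset F$), the point is that $B_a \cup B_b \subset F$ while the connected component $C' \subset (\R^2 \setminus F) \cap S_\delta$ containing $C$ is in $F^c$, so $C'$ cannot cross these barriers and is trapped in the bounded ``middle'' region of $\overline{S_\delta}$ cut out by $B_a$ and $B_b$.

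To construct $B_a$, condition (a) gives a chart $h_a: B(0,1) \to U_a$ with $h_a(0) = (c,a)$ and $\partial F \cap U_a = h_a(\{x_2 = 0\})$; I would parametrize the local boundary arc by $\gamma_a(t) := h_a(t, 0)$ and set $\phi_a(t) := (\gamma_a(t))_1 - c$. Condition (b) then says that $\phi_a$ takes both strictly positive and strictly negative values in every neighborhood of $t = 0$. Moreover, since $C$ extends upward from $(c,a)$, there is $\varepsilon > 0$ with $\{c\} \times (a, a+\varepsilon) \subset \R^2 \setminus F$, so the boundary arc avoids this vertical segment; in particular, whenever $t$ near $0$ is nonzero and $\phi_a(t) = 0$, one has $(\gamma_a(t))_2 \leq a$. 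For small $\delta > 0$, let $I_\delta = (t_L(\delta), t_R(\delta))$ be the connected component of $\phi_a^{-1}((-\delta, \delta))$ containing $0$, at whose endpoints $|\phi_a|$ equals $\delta$. Choosing $\delta$ so that $\{\phi_a(t_L(\delta)), \phi_a(t_R(\delta))\} = \{-\delta, +\delta\}$ gives the desired barrier $B_a := \gamma_a([t_L(\delta), t_R(\delta)])$, a compact simple sub-arc of $\partial F$ in $\overline{S_\delta}$ joining the two vertical sides through $(c,a)$. I expect this \emph{clean-crossing} extraction to be the main technical obstacle, since $\phi_a$ may oscillate near $0$. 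In the bad case $\phi_a(t_L(\delta)) = \phi_a(t_R(\delta)) = +\delta$ (symmetric for $-\delta$), by (b) the minimum $m^-$ of $\phi_a$ on $\overline{I_\delta}$ lies in $(-\delta, 0)$ and is attained at some $t^* \in I_\delta \setminus \{0\}$; shrinking $\delta$ to some $\delta' \in (0, |m^-|)$ expels $t^*$ from the new component $I_{\delta'}$, and a careful iterative argument using (b) afresh and the auxiliary restriction $\phi_a(t) = 0 \Rightarrow (\gamma_a(t))_2 \leq a$ (which forbids the arc from oscillating through the vertical line \emph{above} $(c,a)$) yields a workable $\delta$. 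The analogous construction at $(c,b)$ produces $B_b$.

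Shrinking $\delta$ once more, we may assume $B_a \subset B((c,a), r)$ and $B_b \subset B((c,b), r)$ for some $0 < r < (b-a)/3$; then $B_a \cap B_b = \emptyset$, $B_a$ lies entirely in $\{x_2 < a + r\}$ and $B_b$ entirely in $\{x_2 > b - r\}$, and on $\{x_1 = c\}$ the crossings of $B_a$ (respectively $B_b$) occur at $x_2 \leq a$ (respectively $x_2 \geq b$) by the auxiliary constraint above, so $(B_a \cup B_b) \cap C = \emptyset$. By the Jordan-type separation theorem applied in the simply connected strip $\overline{S_\delta}$, each of the two disjoint simple arcs $B_a, B_b$ joining its opposite vertical sides separates $\overline{S_\delta}$, and together they partition $\overline{S_\delta}$ into three open regions: below $B_a$, between $B_a$ and $B_b$, and above $B_b$. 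The middle region is contained in the compact rectangle $[c - \delta, c + \delta] \times [a - r, b + r]$, hence is bounded, and it contains $C$ (since $C$ is connected, disjoint from $B_a \cup B_b$, and approaches $(c,a) \in B_a$ from above while approaching $(c,b) \in B_b$ from below). Since $C' \subset F^c$ cannot cross $B_a \cup B_b \subset F$, we conclude that $C'$ is contained in this bounded middle region, completing the proof.
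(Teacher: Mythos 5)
Your overall strategy matches the paper's: use the chart from condition (a) to parametrize $\partial F$ near each endpoint of $C$, use condition (b) to show that this boundary arc crosses the vertical line $\{x_1=c\}$, extract sub-arcs of $\partial F$ that span the strip $(c-\delta,c+\delta)\times\R$, and use them as barriers that trap the connected component of $(\R^2\setminus F)\cap S_\delta$ containing $C$ in a bounded region. The paper packages this as a single closed Jordan curve built from two boundary arcs plus two vertical segments; you instead use two disjoint separating arcs. That difference is cosmetic, not substantive.

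The genuine gap is precisely the step you yourself flag as ``the main technical obstacle'': the clean-crossing extraction. What you need is a compact interval $[t_L,t_R]\ni 0$ with $\phi_a([t_L,t_R])\subset[-\delta,\delta]$ and $\{\phi_a(t_L),\phi_a(t_R)\}=\{-\delta,+\delta\}$, and you observe, correctly, that if the component $I_\delta$ of $\phi_a^{-1}((-\delta,\delta))$ containing $0$ has both endpoints mapping to the same value (say $+\delta$), then you must shrink $\delta$ and try again. But the sketched ``iterative argument'' is not a proof: you do not show that the iteration terminates, nor do you rule out a non-terminating sequence $\delta_1>\delta_2>\cdots>0$ in which the component endpoints always have matching signs. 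The auxiliary constraint $\phi_a(t)=0\Rightarrow(\gamma_a(t))_2\leq a$ (which you do justify correctly from $C\subset\R^2\setminus F$) is invoked as the thing that will save the iteration, but you never actually explain how it does so. As it stands, this is a heuristic plan, not an argument, and it leaves the central technical lemma of the proof unproved. The paper handles this differently (and more economically): it shows directly that $c$ is an interior point of the interval $I_j=\pi_1(\gamma_j((-1,1)))$ using (b), which immediately yields parameters $t_j^\pm$ on opposite sides of $0$ hitting $x_1=c\pm\delta$, and then arranges the $\varepsilon$-box so the relevant sub-arc sits inside the strip — avoiding the oscillation analysis altogether. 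I would encourage you either to adopt an interval-image argument of this flavor, or to give the clean-crossing lemma a self-contained proof (for instance by analyzing the signs of $\phi_a$ at the endpoints of the nested family of components $I_\delta$ as $\delta\downarrow 0$ and using that $\bigcap_\delta\overline{I_\delta}=\{0\}$), before this proposal can be considered complete.
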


\begin{proof}
	Let $c\in \R$ be such that $(\R^2\backslash F)\cap\{c\}\times\R$ ha a bounded connectd component $C$. The boundary $\partial_c C$ of $C$ in $\{c\}\times\R$ consists of two points $\xi_1,\xi_2\in\partial F$. Let $U_j:=U_{\xi_j}$ and $h_j:=h_{\xi_j}$ as in hypothesis (a), $j=1,2$.
	
	Obviously,
	$$\gamma_j\colon (-1,1)\rightarrow \partial F\cap U_j, y\mapsto h_j(y,0)$$
	is a correctly defined homeomorphism. With $\pi_1\colon\R^2\rightarrow \R, x\mapsto x_1$ we conclude that
	$$I_j:=\pi_1\circ \gamma_j\left((-1,1)\right)$$
	is an interval. Obviously, $c=\pi_1(\xi_j)=\pi_1(\gamma_j(0))\in I_j$. We claim that $c$ is an interior point of $I_j$. Indeed, assuming that this is not the case, we either have $I_j\subset(-\infty,c]$ or $I_j\subset[c,\infty)$. In the first case,
	$$(-\infty,c]\supset \pi_1(\partial F\cap U_j)$$
	so that $\partial F\cap U_j\cap \{x\in\R^2\colon x_1>c\}$ is empty, contradicting hypothesis (b), as $c=\pi_1(\xi_j)$. Similarly, the assumption $I_j\subset[c,\infty)$ leads to a contradiction. Thus, there is $\delta'>0$ such that
	$$[c-\delta', c+\delta']\subset I_j=\pi_1\left(\partial F\cap U_j\right), j=1,2.$$
	
	Next, we fix $\varepsilon\in (0,\delta')$ satisfying $$[\pi_1(\xi_j)-\varepsilon,\pi_1(\xi_j)+\varepsilon]\times[\pi_2(\xi_j)-\varepsilon,\pi_2(\xi_j)+\varepsilon]\subset U_j, j=1,2.$$
	Then,
	$$C\cap \left(\R^2\backslash\cup_{j=1,2}(\pi_1(\xi_j)-\varepsilon,\pi_1(\xi_j)+\varepsilon)\times(\pi_2(\xi_j)-\varepsilon,\pi_2(\xi_j)+\varepsilon)\right)$$
	is a closed subset of $\R^2$ which is disjoint to $F$, and, by making $\varepsilon$ smaller if necessary, is non-empty. Hence, its Euclidean distance $\delta''$ to $F$ is finite and strictly positive. 
	
	We set $\delta:=\min\{\varepsilon,\delta''/2\}$. For $j=1,2$, there are unique $$\xi_j^+,\xi_j^-\in \partial F\cap [\pi_1(\xi_j)-\varepsilon,\pi_1(\xi_j)+\varepsilon]\times[\pi_2(\xi_j)-\varepsilon,\pi_2(\xi_j)+\varepsilon]$$
	satisfying $\pi_1(\xi_j^+)=c+\delta$ and $\pi_1(\xi_j^-)=c-\delta$. Additionally, for suitable $t_j^+\in(0,1)$ and $t_j^-\in (-1,0)$ it holds $\gamma_j(t_j^+)=\xi_j^+$ and $\gamma_j(t_j^-)=\xi_j^-$, and the continuous curves
	$$\gamma^+:[0,1]\rightarrow\R^2, t\mapsto \xi_1^+ + t(\xi_2^+-\xi_1^+)$$
	and
	$$\gamma^-:[0,1]\rightarrow\R^2, t\mapsto \xi_2^- + t(\xi_1^- -\xi_2^-)$$
	satisfy $\gamma^+((0,1))\cap\partial F=\emptyset=\gamma^-((0,1))\cap\partial F$, $\pi_1(\gamma^+(t))=c+\delta$, and $\pi_1(\gamma^-(t))=c-\delta$, $t\in [0,1]$. The concatenated curve
	$$\gamma:[0,4]\rightarrow\R^2,t\mapsto\begin{cases}
		\gamma^+(t),&\text{if }t\in [0,1],\\
		\gamma_2(t_2^+ +(t-1)(t_2^--t_2^+)),&\text{if }t\in[1,2],\\
		\gamma^-(t-2),&\text{if }t\in[2,3],\\
		\gamma_1(t_1^-+(t-3)(t_1^+-t_1^-)),&\text{if }t\in[3,4]
	\end{cases}$$
	is correctly defined, continuous, closed, and $\gamma_{|[0,4)}$ is injective. The bounded connected component $B$ of $\R^2\backslash\gamma([0,4])$ contains $C$ and coincides with the connected component of $(\R^2\backslash F)\cap (c-\delta,c+\delta)\times \R$ containing $C$. This proves the assertion.
\end{proof}

The following simple example shows that the conclusion of Proposition \ref{prop:continuous boundary for d=2} is not true if we drop hypothesis (b).

\begin{example}
	Let $F$ be the union of the two circles $\partial B(0,1)=\{x\in \R^2: x_1^2+x_2^2=1\}$ and $\partial B((0,2),1)=\{x\in \R^2: x_1^2+(x_2-2)^2=1\}$. Then,
	$$(\R^2\setminus F)\cap\{1\}\times\R=\{1\}\times (-\infty,0)\cup \{1\}\times (0,2)\cup \{1\}\times (2,\infty)$$
	has a bounded connected component but there is no $\delta>0$ such that the same is true for
	$$(\R^2\setminus F)\cap(1-\delta,1+\delta)\times\R.$$
\end{example}

In order to prove a similar result to the previous one for $d\geq 3$, for technical reasons, we have to assume that $\partial F$ is a $C^1$-hypersurface in $\R^d$.

\begin{prop}\label{prop:C1-boundary, d>2}
	Assume that $d\geq 3$ and that $F\subset\R^d$ is closed such that $\partial F$ is a $C^1$-hypersurface with the property that for every $\xi\in \partial F$ its normal space is not spanned by $e_1$. 
	
	Moreover, let $c\in\R$ be such that $\left(\R^d\backslash F\right)\cap \{c\}\times\R^{d-1}$ has a bounded connected component $C$. Then, there is $\delta>0$ such that the connected component of $\left(\R^d\backslash F\right)\cap (c-\delta,c+\delta)\times\R^{d-1}$ which contains $C$, is bounded, too. 
\end{prop}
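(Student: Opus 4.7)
The strategy parallels Proposition \ref{prop:continuous boundary for d=2}: produce, for some $\delta>0$, a bounded, connected, and relatively clopen subset $B$ of $(\R^d\setminus F)\cap(c-\delta,c+\delta)\times\R^{d-1}$ containing $C$; such a $B$ is automatically the connected component sought. The new ingredient is a continuous transport of $\overline{C}$ through nearby slices which respects $\partial F$, furnished by the $C^1$-transversality of $\partial F$ to the slices (a consequence of $e_1$ never being normal).

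The hypothesis makes $\pi_1|_{\partial F}\colon\partial F\to\R$ a $C^1$-submersion. At each $\xi\in\partial_c C:=\overline{C}\cap\partial F$ the implicit function theorem produces a $C^1$-diffeomorphism $\Phi_\xi\colon U_\xi\to V_\xi$ that straightens $\partial F$ to $\{y_d=0\}$, $F$ to $\{y_d\le 0\}$, and satisfies $(\Phi_\xi(x))_1=\pi_1(x)-c$. In each chart the local model $\Psi_\xi(s,x):=\Phi_\xi^{-1}(\Phi_\xi(x)+se_1)$ is $C^1$ in $(s,x)$, transports each slice $\{\pi_1=t\}$ into $\{\pi_1=t+s\}$, and preserves individually $\partial F$, $F$ and $\R^d\setminus F$, with $\Psi_\xi(0,\cdot)=\mathrm{id}$. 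Cover $\partial_c C$ by finitely many charts $U_{\xi_1},\ldots,U_{\xi_N}$ with a $C^1$ partition of unity $(\chi_i)_{i=1}^N$ satisfying $\sum_i\chi_i\equiv 1$ on an open neighborhood $\mathcal N$ of $\partial_c C$. Set $X_i(x):=\partial_s\Psi_{\xi_i}(0,x)$ on $U_{\xi_i}$ and
$$Y:=\rho\sum_{i=1}^N\chi_i X_i+(1-\rho)e_1,$$
where $\rho$ is a cutoff on a neighborhood of $\overline{C}$ equal to $1$ on an open neighborhood of $\partial_c C$ contained in $\mathcal N$ and vanishing on a compact subset of the interior of $C$ bounded away from $\partial F$. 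Then $\langle Y,e_1\rangle\equiv 1$, and $Y$ is tangent to $\partial F$ at every point of $\partial F$ in its domain (there $Y=\sum_i\chi_i X_i$ is a convex combination of tangent vectors, and the transition zone $\{0<\rho<1\}$ is chosen to lie away from $\partial F$). Integrating the flow of $Y$ yields a continuous map $\Psi\colon(-\delta,\delta)\times\overline{C}\to\R^d$ (for some $\delta>0$) satisfying $\Psi(0,\cdot)=\mathrm{id}_{\overline{C}}$; $\pi_1(\Psi(s,x))=c+s$; $\Psi(s,\cdot)$ a homeomorphism onto a compact subset of $\{c+s\}\times\R^{d-1}$; $\Psi(s,\partial_c C)\subset\partial F$; and $\Psi(s,C)\cap F=\emptyset$.

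Let $B:=\bigcup_{|s|<\delta}\Psi(s,C)$. Then $B$ is connected (continuous image of a connected set), open (since $\Psi$ is a local $C^1$-diffeomorphism whose differential is block upper-triangular with unit $e_1$-component in $s$ and an isomorphism in the transverse block), contains $C$, and lies in $(\R^d\setminus F)\cap(c-\delta,c+\delta)\times\R^{d-1}$. Decisively, $B$ is relatively closed in $(\R^d\setminus F)\cap(c-\delta,c+\delta)\times\R^{d-1}$: any limit point $y$ of $B$ in the open slab $(c-\delta,c+\delta)\times\R^{d-1}$ equals $\Psi(s^\ast,x^\ast)$ for some $(s^\ast,x^\ast)\in(-\delta,\delta)\times\overline{C}$, by compactness of $\overline{C}\times[-\delta,\delta]$ and continuity of $\Psi$; if $x^\ast\in C$ then $y\in B$, whereas if $x^\ast\in\partial_c C$ then $y\in\partial F\subset F$, contradicting $y\in\R^d\setminus F$. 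Hence $B$ is the connected component of $(\R^d\setminus F)\cap(c-\delta,c+\delta)\times\R^{d-1}$ containing $C$, and $B\subset\Psi([-\delta,\delta]\times\overline{C})$ is bounded as the continuous image of a compact set.

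The main obstacle is ensuring sufficient regularity for the flow of $Y$ under only the $C^1$-hypothesis on $\partial F$: since pullbacks of vector fields under $C^1$-diffeomorphisms are merely $C^0$, the resulting ODE may lack unique solutions. This is circumvented either by a preliminary $C^1$-small smoothing of $\partial F$ to a $C^2$ hypersurface (possible while preserving the non-$e_1$ normal condition), or by constructing $\Psi$ directly from the $C^1$ chart maps $\Psi_{\xi_i}$ via induction on the chart index, at step $i$ modifying the previous transport only inside $U_{\xi_i}$ using a cutoff subordinate to that chart, thereby bypassing vector-field integration altogether.
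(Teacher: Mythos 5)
Your strategy --- flow $\overline C$ through the slab along a vector field $Y$ that is $e_1$-transverse and tangent to $\partial F$ --- is a legitimate alternative to the paper's argument, and you correctly pinpoint where it fails under the stated hypothesis: $\partial F$ is only $C^1$, so the chart maps $\Phi_{\xi}$ are $C^1$-diffeomorphisms and the resulting vector fields $X_i(x) = D\Phi_{\xi_i}^{-1}(\Phi_{\xi_i}(x))e_1$ are only continuous. A merely continuous vector field need not have a unique flow, and every step downstream (invariance of $\partial F$, of $C$, the claim that each $\Psi(s,\cdot)$ is a homeomorphism of $\overline{C}$ onto a compact subset of the slice, the relative-clopenness of $B$) relies on having a genuine one-parameter family of homeomorphisms. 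So as written there is a gap, and it is not a cosmetic one.

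The two repairs you offer are programme sketches rather than arguments, and each hides real work. For (a), a one-sided $C^1$-small smoothing $F'\subset F$ near $\overline C$ gives $C\subset C'$, but you must additionally prove that the component $C'$ of the perturbed problem is still bounded; that is, that boundedness of $C$ survives a $C^1$-small perturbation of the transverse intersection $\partial F\cap(\{c\}\times\R^{d-1})$ --- true, but it needs a proof, and the gluing of local mollifications into a globally $C^2$ one-sided perturbation of $\partial F$ is itself nontrivial. For (b), cutting off the local transports $\Psi_{\xi_i}$ in the slice variable destroys bijectivity, while cutting off in the flow parameter destroys the property that every point is displaced by exactly $s$ in the $e_1$-direction; the ``inductive modification'' is not obviously compatible with either constraint and so does not evidently produce the required family of homeomorphisms. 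The paper sidesteps the whole issue: it uses the implicit function theorem charts only to assemble a compact barrier $B=\bigcup_j V_j\cap\partial F$ inside the slab, and shows by a purely point-set argument (two-componentness of each $V_j\setminus\partial F$, and that $C^\delta$ and the unbounded $D_\infty^\delta$ are genuinely distinct components of the slab minus $B$) that the component containing $C$ is bounded. That route needs no flow and no regularity of $\partial F$ beyond $C^1$, which is precisely why the statement is posed at the $C^1$ level.
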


\begin{proof}
Let $c\in\R$ and $C\subset \left(\R^d\backslash F\right)\cap \{c\}\times\R^{d-1}$ be as in the hypothesis. We denote the boundary of $C$ in $\{c\}\times\R^{d-1}$ by $\partial_c C$. Then, $\partial_c C$ is a compact subset of $\{c\}\times\R^{d-1}$ and $\left(\{c\}\times\R^{d-1}\right)\setminus \partial_c C$ has exactly one unbounded connected component which we denote by $D_\infty$. In particular, $D_\infty$ is open in $\{c\}\times\R^{d-1}$, disjoint to $C$, and 
\begin{equation}\label{eq:set inclusion 1}
	\partial_cD_\infty\subset\partial_cC\subset\left(\{c\}\times\R^{d-1}\right)\cap \partial F,
\end{equation}
where $\partial_c D_\infty$ is the boundary of $D_\infty$ in $\{c\}\times\R^{d-1}$. 

Let us fix $\xi\in\partial_c D_\infty$. By hypothesis, there is an open, connected subset $U_\xi$ of $\R^d$ conataining $\xi$ and a $C^1$-function $f_\xi:U_\xi\rightarrow \R$ such that
	$$\partial F\cap U_\xi=\{x\in U_\xi \colon f_\xi(x)=0\}$$
	and $e_1, \nabla f_\xi(x)$ are linearly independent for each $x\in U_\xi$. 
Then
$\frac{\partial f_\xi}{\partial x_j}(\xi)\neq0$ for some $2\leq j\leq d$, and w.l.o.g. we may assume that $\frac{\partial f_\xi}{\partial x_d}(\xi)\neq0$. 
From the implicit function theorem it follows that there is $\delta_\xi>0$ for which the open sets
$$Q_\xi:=(\xi_1-\delta_\xi,\xi_1+\delta_\xi)\times\ldots\times(\xi_{d-1}-\delta_\xi,\xi_{d-1}+\delta_\xi)$$
and
$$P_\xi:=(\xi_d-\delta_\xi,\xi_d+\delta_\xi)$$
satisfy $V_\xi:=Q_\xi\times P_\xi\subset U_\xi$ and there is a unique
$C^1$-function $g_\xi\colon Q_\xi\to P_\xi$,
such that
$$g_\xi(\xi_1,\ldots,\xi_{d-1})=\xi_d\quad\text{and}\quad f_\xi(x,g_\xi(x))=0$$
for all $x\in Q_\xi$.
Then 
$$\partial F\cap V_\xi=\{(x,y)\in Q_\xi\times P_\xi\colon f_\xi(x,y)=0\}=\{(x,g_\xi(x))\colon x\in Q_\xi\},$$ 
so
$$V_\xi\setminus\partial F=V_\xi^-\cup V_\xi^+,$$
where the sets
$$V_\xi^-:=\{(x,y)\in Q_\xi\times P_\xi\colon y<g_\xi(x)\}\quad\text{and}\quad
V_\xi^+:=\{(x,y)\in Q_\xi\times P_\xi\colon y>g_\xi(x)\}$$
are non-empty, pairwise disjoint, open and connected. Therefore, the set $V_\xi\setminus\partial F$ has exactly two open connected components $V_\xi^-$ and $V_\xi^+$.

Similarly, the set $(V_\xi\cap\{c\}\times\R^{d-1})\setminus\partial F$ is divided by the graph of a $C^1$-function $h_\xi\colon \widehat{Q_\xi}\to P_\xi$, $h_\xi(x_2,\ldots,x_{d-1})=g_\xi(\xi_1,x_2,\ldots,x_{d-1})$ into two non-empty, open, connected sets, where
$$\widehat{Q_\xi}=(\xi_2-\delta_\xi,\xi_2+\delta_\xi)\times\ldots\times(\xi_{d-1}-\delta_\xi,\xi_{d-1}+\delta_\xi),$$
and the decomposition is the following
$$(V_\xi\cap\{c\}\times\R^{d-1})\setminus\partial F=
(V_\xi^-\cap\{c\}\times\R^{d-1})\cup (V_\xi^+\cap\{c\}\times\R^{d-1}).$$

Let us note that the sets $V_\xi\cap D_\infty$ and $V_\xi\cap C$ are pairwise disjoint non-empty open subsets of $V_\xi\cap\{c\}\times\R^{d-1}$. 
We claim that $(V_\xi\cap\{c\}\times\R^{d-1})\setminus\partial F\subset C\cup D_\infty$. Indeed, if this inclusion were not true, then we would find a bounded connected component $B$ of $\left(\{c\}\times\R^{d-1}\right)\setminus \partial_c C$ which is different from $C$ and such that 
$V_\xi\cap B$ is a non-empty open set. Then $(V_\xi\cap\{c\}\times\R^{d-1})\setminus\partial F$ would have at least three connected components as none two sets among $V_\xi\cap D_\infty$, $V_\xi\cap C$ and $V_\xi\cap B$ are in the same connected component, a contradiction. The inclusion just demonstrated leads easily to the identity  
$$(V_\xi\cap\{c\}\times\R^{d-1})\setminus\partial F=(V_\xi\cap C)\cup (V_\xi\cap D_\infty).$$
Since, $(V_\xi\cap\{c\}\times\R^{d-1})\setminus\partial F$ has exactly two open connected components, we have
$$V_\xi\cap C=V_\xi^-\cap\{c\}\times\R^{d-1}\quad\text{and}\quad V_\xi\cap D_\infty=V_\xi^+\cap\{c\}\times\R^{d-1}$$ 
or 
$$V_\xi\cap C=V_\xi^+\cap\{c\}\times\R^{d-1}\quad\text{and}\quad V_\xi\cap D_\infty=V_\xi^-\cap\{c\}\times\R^{d-1}.$$ 

Since $\partial_c D_\infty\subset \partial_c C$ we have that $\partial_c D_\infty$ is compact and $\{V_\xi:\xi\in\partial_c D_\infty\}$ is an open covering of $\partial_c D_\infty$. Additionally, for every $\xi\in \partial_c D_\infty$ it holds
	$$V_\xi\cap\partial F\cap\left(\{c\}\times\R^{d-1}\right)=V_\xi\cap \partial_c D_\infty.$$
Indeed, ''$\supset$'' follows directly from \eqref{eq:set inclusion 1}. In order to show ''$\subset$'', we observe
\begin{eqnarray*}
	\overline{V_\xi^+\cap(\{c\}\times\R^{d-1})}^{\{c\}\times\R^{d-1}}&=&\overline{\{(c,x_2,\ldots,x_d): x_d> g_\xi(c,x_2,\ldots,x_{d-1})\}}^{\{c\}\times\R^{d-1}}\\
	&\supset& \{(c,x_2,\ldots,x_d): x_d= g_\xi(c,x_2,\ldots,x_{d-1})\}\\
	&=&V_\xi\cap \partial F\cap(\{c\}\times\R^{d-1})
\end{eqnarray*}
where closures are taken in $\{c\}\times\R^{d-1}$. Thus, in case of $V_\xi\cap D_\infty=V_\xi^+\cap(\{c\}\times\R^{d-1})$ we conclude
$$V_\xi\cap \partial F\cap(\{c\}\times\R^{d-1})\subset \overline{V_\xi\cap D_\infty}^{\{c\}\times\R^{d-1}}\subset \overline{D_\infty}^{\{c\}\times\R^{d-1}}=\partial_c D_\infty\cup D_\infty$$
which by $\partial F\cap D_\infty=\emptyset$ implies
$$V_\xi\cap\partial F\cap(\{c\}\times\R^{d-1})\subset V_\xi\cap \partial_c D_\infty.$$
Analogously, in case of $V_\xi\cap D_\infty=V_\xi^-\cap(\{c\}\times\R^{d-1})$, one uses with the same arguments
$$\overline{V_\xi^-\cap(\{c\}\times\R^{d-1})}^{\{c\}\times\R^{d-1}}\supset V_\xi\cap \partial F\cap(\{c\}\times\R^{d-1})$$
to conclude again $V_\xi\cap\partial F\cap(\{c\}\times\R^{d-1})\subset V_\xi\cap \partial_c D_\infty$, as desired.

Let $\xi^{(1)},\ldots,\xi^{(m)}\in\partial_c D_\infty$ be such that $\{V_{\xi^{(1)}},\ldots, V_{\xi^{(m)}}\}$ is an open covering of $\partial_c D_\infty$. We define
	$$\delta:=\min\{\delta_{\xi^{(j)}}: 1\leq j\leq  m\}\text{ and }  V_j:=V_{\xi^{(j)}}\cap((c-\delta,c+\delta)\times\R^{d-1})$$
for $j=1,\ldots,m$.
Then, $\{V_1,\ldots, V_m\}$  
remains an open covering of $\partial _c D_\infty$ and $B:=\cup_{j=1}^m V_j\cap \partial F$ is a subset of $(c-\delta,c+\delta)\times \R^{d-1}$ which satisfies
$$B\cap \{c\}\times\R^{d-1}=\cup_{j=1}^m V_j\cap \partial F\cap(\{c\}\times\R^{d-1})=\cup_{j=1}^m V_j\cap \partial_c D_\infty=\partial_c D_\infty.$$
Let $C^\delta$ be the connected component of $(\R^d\backslash B)\cap (c-\delta,c+\delta)\times\R^{d-1}$ which contains $C$. Additionally, since $D_\infty$ is connected and obviously contained in any unbounded connected component of $(\R^d\backslash B)\cap (c-\delta,c+\delta)\times\R^{d-1}$, 
there is precisely one such component. We denote it by $D_\infty^\delta$. Then, denoting the boundary of $D_\infty^\delta$ in $(c-\delta,c+\delta)\times \R^{d-1}$ by $\partial_\delta D_\infty^\delta$ it holds
	$$B\cap\{c\}\times\R^{d-1}=\partial_c D_\infty\subset \partial_\delta D_\infty^\delta\subset\partial F.$$ 

Without loss of generality we may assume that $\delta_{\xi_1}=\delta$ which gives $V_1=V_{\xi_1}$, and also that $V_1\cap C=V_1^-\cap\{c\}\times\R^{d-1}$ and $V_1\cap D_\infty=V_1^+\cap\{c\}\times\R^{d-1}$. Then $V_1^-$ is a connected subset of 
$$(\R^d\backslash B)\cap (c-\delta,c+\delta)\times\R^{d-1}$$ which intersects $C$. Therefore, $V_1^-\subset C^\delta$ so that $V_1^-\subset V_1\cap C^\delta$. Similarly, $V_1^+\subset V_1\cap D_\infty^\delta$. From $V_1\setminus\partial F=V_1^+\cup V_1^-$ and
$$D_\infty^\delta\cup C^\delta\subset (\R^d\backslash B)\cap (c-\delta,c+\delta)\times\R^{d-1}$$
we derive $V_1\cap C^\delta\subset V_1^+\cup V_1^-$ and $V_1\cap D_\infty^\delta\subset V_1^+\cup V_1^-$. Combining these inclusions with the disjointness of $C^\delta$ and $D_\infty^\delta$ as well as  $V_1^-\subset V_1\cap C^\delta$ and $V_1^+\subset V_1\cap D_\infty^\delta$, respectively, yields $V_1\cap C^\delta=V_1^-$ and $V_1\cap D_\infty^\delta=V_1^+$, respectively.

In order to complete the proof, we have to show that $C^\delta$ and $D_\infty^\delta$ are disjoint. But this is not the case, as $C^\delta=D_\infty^\delta$ gives
$$V_1^-=V_1\cap C^\delta=V_1\cap D_\infty^\delta=V_1^+,$$
a contradiction. 
\end{proof}

With the two previous propositions we can now prove the next corollary.

\begin{corollary}\label{corollary:improvement of thm}
	Let $d\geq 2$, let $P$ be as in Theorem \ref{thm}, and let $F\subset\R^d$ be closed such that $(F, \R^d)$ is a $P$-Runge pair. Moreover, assume that one of the following additional hypotheses holds true.
	\begin{itemize}
		\item[(a)] Let $d\geq 3$ and let $F$ have $C^1$-boundary such that for every $\xi\in\partial F$ its normal space is not spanned by $e_1$.
		\item[(b)] Let $d=2$, and let $F$ have continuous boundary such that for every $\xi\in\partial F$ and every neighborhood $U$ of $\xi$ in $\R^2$ it holds
		$$U\cap \partial F\cap\{x\in \R^2: x_1 <\xi_1\}\neq \emptyset \quad\textit{and}\quad U\cap \partial F\cap\{x\in \R^2: x_1 >\xi_1\}\neq \emptyset.$$
	\end{itemize}
	Then, there is no $c\in\R$ such that $(\R^d\backslash F)\cap \{c\}\times\R^{d-1}$ has a bounded connected component.
\end{corollary}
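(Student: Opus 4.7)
The plan is to argue by contradiction, combining the two topological propositions just established (Proposition \ref{prop:continuous boundary for d=2} for the case $d=2$ and Proposition \ref{prop:C1-boundary, d>2} for the case $d\geq 3$) with the necessary condition provided by Theorem \ref{thm}, applied to the special situation $F_1=F$, $F_2=\R^d$.

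Concretely, suppose to the contrary that there is some $c\in\R$ for which $(\R^d\setminus F)\cap \{c\}\times\R^{d-1}$ possesses a bounded connected component $C$. Under hypothesis (b), Proposition \ref{prop:continuous boundary for d=2} applies directly; under hypothesis (a), Proposition \ref{prop:C1-boundary, d>2} applies. In either case, we obtain some $\delta>0$ such that the connected component $C^{\delta}$ of $(\R^d\setminus F)\cap(c-\delta,c+\delta)\times\R^{d-1}$ which contains $C$ is itself bounded.

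Since $F_2=\R^d$, the bounded set $C^{\delta}$ is trivially contained in $F_2$. Thus $C^{\delta}$ is a bounded connected component of $(\R^d\setminus F)\cap(c-\delta,c+\delta)\times\R^{d-1}$ contained in $F_2$, with $\epsilon:=\delta$. This directly contradicts the conclusion of Theorem \ref{thm}, which asserts that no such $c$ and $\epsilon$ can exist when $F,\R^d$ is a $P$-Runge pair. Hence the assumption was false, and the corollary follows.

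No step appears to be a serious obstacle: the entire argument is a short deduction from previously established results. The main delicate ingredients, namely the extendability of a bounded planar/slice component to a bounded slab component, are already absorbed into Propositions \ref{prop:continuous boundary for d=2} and \ref{prop:C1-boundary, d>2}, and the conversion from slab components to an obstruction for the Runge property is exactly the content of Theorem \ref{thm}.
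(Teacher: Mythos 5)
Your proposal is correct and follows essentially the same approach as the paper: assume for contradiction that some slice $(\R^d\setminus F)\cap\{c\}\times\R^{d-1}$ has a bounded component, upgrade it to a bounded component of a slab via Proposition \ref{prop:continuous boundary for d=2} (case (b)) or Proposition \ref{prop:C1-boundary, d>2} (case (a)), and then contradict Theorem \ref{thm} since every bounded set is trivially contained in $F_2=\R^d$.
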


\begin{proof}
	We assume that there is $c\in\R$ such that
	$$(\R^d\backslash F)\cap\{c\}\times\R^{d-1}$$
	has a bounded connected component. Then, by Propositions \ref{prop:C1-boundary, d>2} and \ref{prop:continuous boundary for d=2}, respectively, there is $\delta>0$ for which
	$$(\R^d\backslash F)\cap(c-\delta,c+\delta)\times\R^{d-1}$$
	has a bounded connected component, too. Since $(F, \R^d)$ is supposed to be a $P$-Runge pair, Theorem \ref{thm} gives a contradiction.
\end{proof}

We are now ready to give the proof of Theorem B from the introduction.

\begin{proof}[Proof of Theorem B]
	By Theorem \ref{thm-non-elliptic}, for $W=\operatorname{span}\{e_1\}$ in condition (iv), (i) implies (ii). Moreover, (iii) follows from (ii) by Theorem \ref{thm}.
	
	We assume that one of the additional hypotheses (a) or (b) holds. Then, invoking Corollary \ref{corollary:improvement of thm} completes the proof.
\end{proof}

We conclude with an application of Corollary \ref{corollary:improvement of thm} to the wave operator in one spatial variable. As is customary, characteristic hyperplanes in $\R^2$ are called characteristic lines.

\begin{corollary}\label{cor:global Runge for wave operator}
	Let $P(D)=\partial_1^2-\partial_2^2$ and let $F\subset\R^2$ be closed with continuous boundary such that for each $\xi\in\partial F$ and every neighborhood $U$ of $\xi$ in $\R^2$ it holds
	$$U\cap\partial F\cap\{x\in \R^2:x_1+x_2<\xi_1+\xi_2\}\neq \emptyset\text{ and }U\cap\partial F\cap\{x\in \R^2:x_1-x_2<\xi_1-\xi_2\}\neq \emptyset$$
	as well as
	$$U\cap\partial F\cap\{x\in \R^2:x_1+x_2>\xi_1+\xi_2\}\neq \emptyset\text{ and }U\cap\partial F\cap\{x\in \R^2:x_1-x_2>\xi_1-\xi_2\}\neq \emptyset.$$
	Then, the following are equivalent.
	\begin{itemize}
		\item[(i)] $(F,\R^2)$ is a $P$-Runge pair.
		\item[(ii)] There is no characteristic line for $P(D)$ for which its intersection with $\R^2\backslash F$ has a bounded connected component.
	\end{itemize}
\end{corollary}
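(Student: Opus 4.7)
The plan is to leverage the factorization $P(D)=P_1(D)P_2(D)$ with $P_1(D)=\partial_1-\partial_2$ and $P_2(D)=\partial_1+\partial_2$, so that by Corollary \ref{corollary:compositions} the assertion ``$F,\R^2$ is a $P$-Runge pair'' is equivalent to $F,\R^2$ being simultaneously a $P_1$-Runge and a $P_2$-Runge pair. Note also that the characteristic lines of $P(D)$ split naturally into the family $\{x_1+x_2=c\}_{c\in\R}$ (characteristic for $P_1$) and $\{x_1-x_2=c\}_{c\in\R}$ (characteristic for $P_2$). With this in hand the two directions become essentially independent calculations.

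For the implication (ii)$\Rightarrow$(i) I would simply quote Corollary \ref{corollary:one dimensional wave operator}: since $F_2=\R^2$, every bounded connected component of $(\R^2\setminus F)\cap H$ is automatically contained in $F_2$, so the hypothesis of that corollary reduces word-for-word to condition (ii).

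For the harder direction (i)$\Rightarrow$(ii), the idea is to apply Corollary \ref{corollary:improvement of thm}(b) twice: once to $P_1$ and once to $P_2$, each time after a linear change of coordinates that puts the relevant characteristic direction along $e_1$. Concretely, under $T_1(x)=\tfrac{1}{\sqrt2}(x_1+x_2,\,x_1-x_2)$ the operator $P_1(D)$ becomes a nonzero constant multiple of $\partial_{y_2}$, whose symbol $\xi_2$ clearly satisfies the structural hypotheses of Theorem \ref{thm} (the $Q_k$ are constants, hence the degree condition on $\deg_{\xi_1}Q_k$ is trivial). The set $T_1(F)$ has continuous boundary because $T_1$ is a linear homeomorphism, and the two neighborhood conditions requested by Corollary \ref{corollary:improvement of thm}(b) for $T_1(F)$ translate back, via $y_1=\tfrac{1}{\sqrt2}(x_1+x_2)$, to precisely the first two neighborhood conditions on $F$ in the hypothesis. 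Applying Corollary \ref{corollary:improvement of thm} then rules out bounded connected components of $(\R^2\setminus F)$ in any line $\{x_1+x_2=c\}$. The same argument with $T_2(x)=\tfrac{1}{\sqrt2}(x_1-x_2,\,x_1+x_2)$ and the remaining two neighborhood conditions handles $P_2$ and rules out bounded components in lines $\{x_1-x_2=c\}$, which together give (ii).

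The main technical point, and the place where I would expect to spend the most care, is confirming that the $P$-Runge pair property, continuous boundary, and the four neighborhood conditions are genuinely invariant under the orthogonal changes of variable $T_1,T_2$, so that Corollary \ref{corollary:improvement of thm}(b) really does apply to $T_j(F)$ with the transformed operator. Once this bookkeeping is in place, the four neighborhood conditions in the statement should dovetail precisely with the two applications of hypothesis (b) and no further work is needed.
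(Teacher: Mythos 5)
Your proposal is correct and follows essentially the same route as the paper's own proof: reduce to the two first-order factors via Corollary~\ref{corollary:compositions}, carry out a linear change of coordinates so that each factor becomes a single partial derivative, and then invoke Corollary~\ref{corollary:improvement of thm}(b) for each transformed set (the paper does one change of variables first and then factors $\partial_{y_1}\partial_{y_2}$, you factor first and rotate per factor — this is only a cosmetic reordering). One small bookkeeping slip: for $T_1$ the two relevant hypotheses on $F$ are the two that involve $x_1+x_2$ (one with $<$, one with $>$), not literally ``the first two'' as written in your proposal, and symmetrically the two $x_1-x_2$ conditions go with $T_2$.
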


\begin{proof}
	That (ii) implies (i) follows immediately from Corollary \ref{corollary:one dimensional wave operator}.
	
	To prove the reverse implication, we consider the linear bijection
	$$T:\R^2\rightarrow\R^2, (y_1,y_2)\mapsto\frac{1}{2} (y_1+y_2, y_1-y_2)$$
	and its inverse $T^{-1}=T$, as well as $Q(D)=\frac{\partial^2}{\partial y_1\partial y_2}$. With these, it holds $(P(D)f)\circ T=Q(D)(f\circ T)$ for every Whitney jet $f=(f^{(\alpha)})_{\alpha\in\N_0^2}$, where $f\circ T=(f^{(\alpha)}\circ T)_{\alpha\in\N_0^2}$. The isomorphims $\mathscr{E}(\R^2)\rightarrow\mathscr{E}(\R^2), f\mapsto f\circ T$ and $\mathscr{E}(T^{-1}(F))\rightarrow\mathscr{E}(F), f\mapsto f\circ T$ map $\mathscr{E}_Q(\R^2)$ and $\mathscr{E}_Q(T^{-1}(F))$ onto $\mathscr{E}_P(\R^2)$ and $\mathscr{E}_P(F)$, respectively, and they commute with the restriction map. Therefore, $(F,\R^2)$ is a $P$-Runge pair precisely when $T^{-1}(F), \R^2$ is a $Q$-Runge pair. By Corollary \ref{corollary:compositions}, the latter holds precisely when $(T^{-1}(F), \R^2)$ is both a $\frac{\partial}{\partial y_1}$-Runge pair as well as a $\frac{\partial}{\partial y_2}$-Runge pair.
	
	It is straightforward to verify that the hypotheses on $F$ imply that $T^{-1}(F)$ is closed set with continuous boundary such that for each $\eta\in\partial T^{-1}(F)$ and every neighborhood $V$ of $\eta\in\R^2$ it holds 
	$$V\cap\partial T^{-1}(F)\cap\{x\in\R^2: x_1<\eta_1\}\neq \emptyset\text{ and }V\cap\partial T^{-1}(F)\cap\{x\in\R^2: x_1>\eta_1\}\neq \emptyset$$
	as well as
	$$V\cap\partial T^{-1}(F)\cap\{x\in\R^2: x_2<\eta_2\}\neq\emptyset\text{ and }V\cap\partial T^{-1}(F)\cap\{x\in\R^2: x_2>\eta_2\}\neq\emptyset$$
	Thus, if $(T^{-1}(F), \R^2)$ is a $\frac{\partial}{\partial y_2}$-Runge pair it follows from Corollary \ref{corollary:improvement of thm} that there is no $c\in\R$ for which $(\R^2\backslash T^{-1}(F))\cap \{c\}\times\R$ has a bounded connected component. A second application of Corollary \ref{corollary:improvement of thm} combined with another obvious change of variables yields that there is no $c\in\R$ for which $(\R^2\backslash T^{-1}(F))\cap \R\times\{c\}$ has a bounded connected component whenever $(T^{-1}(F), \R^2)$ is a $\frac{\partial}{\partial y_1}$-Runge pair. Hence, if $(F,\R^2)$ is a $P$-Runge pair, there is no $c\in\R$ such that
	$$(\R^2\backslash T^{-1}(F))\cap \{c\}\times\R=T^{-1}\left((\R^2\backslash F)\cap T(\{c\}\times\R)\right)$$
	or
	$$(\R^2\backslash T^{-1}(F))\cap \R\times \{c\}=T^{-1}\left((\R^2\backslash F)\cap T(\R\times \{c\})\right)$$
	has a bounded connected component. Since the characteristic lines for $P$ are precisely given by $T(\{c\}\times\R)$ and $T(\R\times \{c\})$, $c\in\R$, it herefrom follows that (i) implies (ii).
\end{proof}

\section{Appendix}

We say that an open set $\Omega\subset\R^d$ has a \emph{H\"older continuous boundary} if for all $x_0\in\partial\Omega$ there are an open neighborhood $U$ of $x_0$, a rotation $R$ of $\R^d$, and a function $\phi\colon\R^{d-1}\to\R$ such that
\begin{equation}\label{eq:Hoelder continuity}
	\exists\, C\geq 1, \theta\in (0,1]\,\forall\, \xi,\eta\in\R^{d-1}\colon\,|\varphi(\xi)-\varphi(\eta)|\leq C|\xi-\eta|^\theta
\end{equation}
and
$$\Omega\cap U=R(\{(\xi,\eta)\colon \eta>\phi(\xi)\})\cap U.$$

\begin{prop}\label{prop:Hoelder boundary implies strong regularity}
Let $\Omega$ be an open subset of $\R^d$ such that $\overline{\Omega}$ is path connected. If $\Omega$ has H\"older continuous boundary then $\overline{\Omega}$ satisfies the strong regularity condition.
\end{prop}
\begin{proof}
Let us assume that $\Omega$ has a H\"older continuous boundray and
let us fix $x_0\in\overline{\Omega}$. If $x_0\in\mathrm{int}\Omega$, then clearly there is $0<\epsilon\leq\frac{1}{2}$ such that $B(x_0,\epsilon)\subset\mathrm{int}\Omega$, and for each pair of $x,y\in B(x_0,\epsilon)$ the segment $[x,y]$ -- whose lenght is $|x-y|\leq|x-y|^\theta$ for every $\theta\in (0,1]$ -- is in $B(x_0,\epsilon)$.  

If $x_0\in\partial\Omega$ then let us take, according to our assumption, an open neighborhood $U$ of $x_0$, a rotation $R$ of $\R^d$, and a function $\phi\colon\R^{d-1}\to\R$ such that \eqref{eq:Hoelder continuity} holds and
$$\Omega\cap U=R(\{(\xi,\eta)\colon \eta>\phi(\xi)\})\cap U.$$
Then $x_0=R(x_0',\phi(x_0'))$ for some $x_0'\in\R^{d-1}$ 
and one can choose $0<\epsilon_0\leq\frac{1}{2}$ such that
$$V:=R\left(B(x_0',\epsilon_0)\times\left(\phi(x_0')-5C\epsilon_0^\theta,\phi(x_0')+5C\epsilon_0^\theta\right)\right)\subset U.$$
Since
$$\phi(x_0')-\phi(\xi)\leq |\phi(x_0')-\phi(\xi)|\leq C|x_0'-\xi|^\theta<C\epsilon_0^\theta$$
for $\xi\in B(x_0',\epsilon_0)$,
we have clearly
$\phi(\xi)>\phi(x_0')-5C\epsilon_0^\theta$ for $\xi\in B(x_0',\epsilon_0)$.
Consequently, 
$$W:=R\left(\left\{(\xi,\eta)\colon \xi\in B(x_0',\epsilon_0)\text{ and }\phi(\xi)<\eta<\phi(x_0')+5C\epsilon_0^\theta\right\}\right)\subset V,$$
and $W\subset\Omega$.

Now let us choose $0<\epsilon<\epsilon_0$ so that $B(x_0,\epsilon)\subset V$ and fix $x,y\in\overline{\Omega}\cap B(x_0,\epsilon)$. 
Let us define, 
$$u(x',y'):=R((x',\phi(x')+2C|x'-y'|^\theta))$$ 
and
$$v(x',y'):=R((y',\phi(x')+2C|x'-y'|^\theta)).$$

If $x,y\in\partial\Omega$ then $x=R(x',\phi(x'))$ and $y=R(y',\phi(y'))$ for some $x',y'\in B(x_0',\epsilon_0)$.
If $\gamma$ is the polygonal chain $(x,u(x',y'),v(x',y'),y)$
then $\im\gamma\setminus\{x,y\}\subset W\subset \Omega$. Indeed, let us note that
\begin{align*}
\phi(x')-\phi(x_0')+2C|x'-y'|^\theta&\leq C|x'-x_0'|^\theta+2C|x'-y'|^\theta\\ 
&\leq C|x-x_0|^\theta+2C|x-y|^\theta
<C\epsilon^\theta+2C(2\epsilon)^\theta\\
&\leq 5C\epsilon^\theta<5C\epsilon_0^\theta,
\end{align*}
so $\phi(x')+2C|x'-y'|^\theta<\phi(x_0')+5C\epsilon_0^\theta$. This shows that $u(x',y')\in W$, and the segment $(x,u(x',y')]$ is contained in $\Omega$. Since
$$(1-t)\phi(x')+t\phi(y')-\phi(x')=t(\phi(y')-\phi(x'))\leq tC|x'-y'|^\theta<2C|x'-y'|^\theta$$
for all $0\leq t\leq1$, we have
$$\phi(x')+2C|x'-y'|^\theta>(1-t)\phi(x')+t\phi(y')$$
for the same parameters $t$.
Consequently, the segments $[u(x',y'),v(x',y')]$ and $[v(x',y'),y)$ are contained in $\Omega$.

Moreover,
\begin{align*}
|\gamma|&=2C|x'-y'|^\theta+|x'-y'|+|\phi(x')+2C|x'-y'|^\theta-\phi(y')|\\
&\leq 3C|x'-y'|^\theta+|\phi(x')-\phi(y')|+2C|x'-y'|^\theta\leq6C|x-y|^\theta,
\end{align*}
so the strong regularity condition is satisfied in the case that $x$ and $y$ are both boundary points of $\Omega$.

In general, $x,y\in\overline{\Omega}\cap B(x_0,\epsilon)$, so $x=R(x',\phi(x')+\delta(x))$ and $y=R(y',\phi(y')+\delta(y))$ for some $x',y'\in B(x_0',\epsilon)$ and $\delta(x),\delta(y)>0$. 
We shall consider the following cases.
\paragraph{Case 1: $x,y\in\overline{\Omega}\cap B(x_0,\epsilon)$ and $0\leq\delta(x),\delta(y)\leq2C|x'-y'|^\theta$.}  The polygonal chain 
$\gamma:=(x,u(x',y'),v(x',y'),y)$
is contained in the polygon chain defined above for boundary points, so it admits the strong regularity condition.  

\paragraph{Case 2: $x,y\in\Omega\cap B(x_0,\epsilon)$ and $\delta(x),\delta(y)\geq2C|x'-y'|^\theta$.}
Then $\gamma:=[x,y]\subset\Omega$, and the conclusion is trivially satisfied.

\paragraph{Case 3: $x,y\in\Omega\cap B(x_0,\epsilon)$, $0\leq\delta(x)\leq2C|x'-y'|^\theta$ and $\delta(y)\geq2C|x'-y'|^\theta$.} Here for the curve $\gamma$ we choose the polygonal chain
$(x,u(x',y'),y)$ which is, by similar arguments, contained in $\Omega\cup\{x\}$. Its length is
\begin{align*}
|\gamma|&=|x-u(x',y')|+|u(x',y')-y|\leq2|x-u(x',y')|+|x-y|\\
&=4C|x'-y'|^\theta+|x-y|\leq5C|x-y|^\theta. 
\end{align*}

\paragraph{Case 4: $x,y\in\Omega\cap B(x_0,\epsilon)$, $\delta(x)\geq2C|x'-y'|^\theta$ and $0\leq\delta(y)\leq2C|x'-y'|^\theta$.}
Analogously as in case 3.

Summing up, any pair of points $x,y\in\overline{\Omega}\cap B(x_0,\epsilon)$ can be joined by a polygonal chain $\gamma$ such that $\im\gamma\setminus\{x,y\}\subset\Omega$ and $|\gamma|\leq6C|x-y|^\theta$, as desired.    
\end{proof}

\subsection*{Acknowledgements}
Much of the first author's research was conducted during a research stay in the Department of Mathematics and Statistics of the University of Helsinki, Finland. He would like to extend special thanks to Hans-Olav Tylli for his extremely kind hospitality throughout the whole stay.   

\subsection*{Funding}
The research of the first named author was partially supported by Adam Mickiewicz University, Pozna\'n, Poland, ID-UB Call No.~116 'Mobility'.

{\small

}

\end{document}